\numberwithin{equation}{section}
\newtheoremstyle{thmlemcorr}{10pt}{10pt}{\itshape}{}{\bfseries}{.}{10pt}{{\thmname{#1}\thmnumber{ #2}\thmnote{ (#3)}}}
\newtheoremstyle{thmlemcorr*}{10pt}{10pt}{\itshape}{}{\bfseries}{.}\newline{{\thmname{#1}\thmnumber{ #2}\thmnote{ (#3)}}}
\newtheoremstyle{remexample}{10pt}{10pt}{}{}{\bfseries}{.}{10pt}{{\thmname{#1}\thmnumber{ #2}\thmnote{ (#3)}}}
\newtheoremstyle{ass}{10pt}{10pt}{}{}{\bfseries}{.}{10pt}{{\thmname{#1}\thmnumber{ A#2}\thmnote{ (#3)}}}
\theoremstyle{thmlemcorr}
\newtheorem{theorem}{Theorem}
\numberwithin{theorem}{section}
\newtheorem{lemma}[theorem]{Lemma}
\newtheorem{corollary}[theorem]{Corollary}
\newtheorem{proposition}[theorem]{Proposition}
\newtheorem{definition}[theorem]{Definition}
\theoremstyle{thmlemcorr*}
\newtheorem{theorem*}{Theorem}
\newtheorem{lemma*}[theorem]{Lemma}
\newtheorem{corollary*}[theorem]{Corollary}
\newtheorem{proposition*}[theorem]{Proposition}
\newtheorem{problem*}[theorem]{Problem}
\newtheorem{conjecture*}[theorem]{Conjecture}
\newtheorem{definition*}[theorem]{Definition}
\theoremstyle{remexample}
\newtheorem{remark}[theorem]{Remark}
\theoremstyle{ass}
\newcommand{\Crm}{\mathrm{C}}
\newcommand{\Lrm}{\mathrm{L}}
\newcommand{\Wrm}{\mathrm{W}}
\newcommand{\Lcal}{\mathcal{L}}
\newcommand{\Qcal}{\mathcal{Q}}
\newcommand{\Rcal}{\mathcal{R}}
\newcommand{\Mbf}{\mathbf{M}}
\DeclareMathOperator{\id}{id}
\DeclareMathOperator{\rank}{rank}
\DeclareMathOperator{\supp}{supp}
\newcommand{\ee}{\mathrm{e}}
\newcommand{\set}[2]{\left\{\, #1 \ \ \textup{\textbf{:}}\ \ #2 \,\right\}}
\newcommand{\setn}[2]{\{\, #1 \ \ \textup{\textbf{:}}\ \ #2 \,\}}
\newcommand{\setb}[2]{\bigl\{\, #1 \ \ \textup{\textbf{:}}\ \ #2 \,\bigr\}}
\newcommand{\setBB}[2]{\biggl\{\, #1 \ \ \textup{\textbf{:}}\ \ #2 \,\biggr\}}
\newcommand{\norm}[1]{\|#1\|}
\newcommand{\abs}[1]{|#1|}
\newcommand{\abslr}[1]{\left|#1\right|}
\newcommand{\absb}[1]{\bigl|#1\bigr|}
\newcommand{\dd}{\;\mathrm{d}}
\newcommand{\N}{\mathbb{N}}
\newcommand{\R}{\mathbb{R}}
\newcommand{\toweak}{\rightharpoonup}
\newcommand{\toweakstar}{\overset{*}\rightharpoondown}
\newcommand{\sbullet}{\begin{picture}(1,1)(-0.5,-2.5)\circle*{2}\end{picture}}
\newcommand{\frarg}{\,\sbullet\,}
\newcommand{\toY}{\overset{\mathrm{Y}}{\to}}
\newcommand{\term}[1]{\textbf{#1}}
\newcommand{\proofstep}[1]{\textit{#1}}
\def\Xint#1{\mathchoice 
{\XXint\displaystyle\textstyle{#1}}%
{\XXint\textstyle\scriptstyle{#1}}%
{\XXint\scriptstyle\scriptscriptstyle{#1}}%
{\XXint\scriptscriptstyle\scriptscriptstyle{#1}}%
\!\int} 
\def\XXint#1#2#3{{\setbox0=\hbox{$#1{#2#3}{\int}$} 
\vcenter{\hbox{$#2#3$}}\kern-.5\wd0}} 
\def\dashint{\,\Xint-}
\newcommand{\restrict}{\begin{picture}(10,8)\put(2,0){\line(0,1){7}}\put(1.8,0){\line(1,0){7}}\end{picture}}
\renewcommand{\epsilon}{\varepsilon}
\renewcommand{\phi}{\varphi}
\begin{document}


\title[Orientation-preserving Young measures]{Orientation-preserving Young measures}

\author{Konstantinos Koumatos}
\address{\textit{Konstantinos Koumatos:} Mathematical Institute, University of Oxford, Andrew Wiles Building, Radcliffe Observatory Quarter, Woodstock Road, Oxford OX2 6GG, United Kingdom.}
\email{Konstantinos.Koumatos@maths.ox.ac.uk}

\author{Filip Rindler}
\address{\textit{Filip Rindler:} University of Cambridge, Centre for Mathematical Sciences, Wilberforce Road, Cambridge CB3 0WA, United Kingdom.}
\email{F.Rindler@maths.cam.ac.uk}

\author{Emil Wiedemann}
\address{\textit{Emil Wiedemann:} Department of Mathematics, University of British Columbia, and Pacific Institute for the Mathematical Sciences, Vancouver, B.C., Canada V6T 1Z2.}
\email{emil@math.ubc.ca}

\begin{abstract}
We prove a characterization result in the spirit of the Kinderlehrer--Pedregal Theorem for Young measures generated by gradients of Sobolev maps satisfying the orientation-preserving constraint, that is the pointwise Jacobian is positive almost everywhere. The argument to construct the appropriate generating sequences from such Young measures is based on a variant of convex integration in conjunction with an explicit lamination construction in matrix space. Our generating sequence is bounded in $\Lrm^p$ for $p$ less than the space dimension, a regime in which the pointwise Jacobian loses some of its important properties. On the other hand, for $p$ larger than, or equal to, the space dimension the situation necessarily becomes rigid and a construction as presented here cannot succeed. Applications to relaxation of integral functionals, the theory of semiconvex hulls, and approximation of weakly orientation-preserving maps by strictly orientation-preserving ones in Sobolev spaces are given.
\vspace{4pt}

\noindent\textsc{MSC (2010): 49J45 (primary); 28B05, 46G10.} 

\noindent\textsc{Keywords:} Gradient Young measure, characterization, convex integration, orientation-preserving deformations, positive Jacobian, laminations.

\vspace{4pt}

\noindent\textsc{Date:} \today{} (version 3.0).
\end{abstract}


\hypersetup{
  pdfauthor = {Konstantinos Koumatos (University of Oxford) and Filip Rindler (University of Cambridge) and Emil Wiedemann (University of British Columbia and PIMS)},
  pdftitle = {Orientation-preserving Young measures},
  pdfsubject = {MSC (2010): 49J45 (primary); 28B05, 46G10},
  pdfkeywords = {Gradient Young measure, characterization, convex integration, orientation-preserving deformations, positive Jacobian, laminations}
}


\maketitle




\section{Introduction}

Young measures allow to express limits of certain nonlinear quantities that depend on a weakly converging subsequence, a recurring problem in the Calculus of Variations and the theory of nonlinear PDEs~\cite{Youn37GCEA,Youn69LCVO,Ball89VFTY,Pedr97PMVP,Mull99VMMP}. More specifically, let $(v_j) \subset \Lrm^p(\Omega;\R^N)$ ($\Omega \subset \R^d$ an open set) be a uniformly $\Lrm^p$-bounded sequence (here, $1 \leq p \leq \infty$). Then, the so-called Fundamental Theorem for Young measures assures that there exists a family of probability measures $(\nu_x)_{x \in \Omega}$, indexed by the points from the domain, such that
\[
  \lim_{j\to\infty} \int_\Omega f(x,v_j(x)) \dd x  \;\;\to\;\;  \int_\Omega \int_{\R^N} f(x,A) \dd \nu_x(A) \dd x
\]
for all Carath\'{e}odory functions $f \colon \Omega \times \R^N \to \R$ such that $(f(\frarg,v_j))_j$ is equiintegrable. The family $\nu = (\nu_x)_{x \in \Omega}$ is called the \term{Young measure} generated by the sequence $(v_j)$.

In applications, the sequence $(v_j)$ is usually constrained either by differential or pointwise constraints. Most commonly, \term{gradient Young measures} are considered, i.e.\ those that are generated by a sequence of gradients $(v_j) = (\nabla u_j)$ with $(u_j) \subset \Wrm^{1,p}(\Omega;\R^m)$ uniformly bounded (here, $\R^N = \R^{m \times d}$). For example, in elasticity theory, gradient Young measures have been instrumental in describing the formation of microstructure as a result of non-convex energy minimization \cite{BalJam92PETT,Bhat92SAM,Mull99VMMP}. Immediately, the question arises whether one can see the property of being generated by a sequence of gradients from the Young measure itself. This fundamental problem was solved by the seminal Kinderlehrer--Pedregal Theorem~\cite{KinPed91CYMG,KinPed94GYMG}, which fully characterized gradient Young measures by duality with quasiconvex functions. Various variants and generalizations of the Kinderlehrer--Pedregal result have since emerged in the literature, e.g.~\cite{FoMuPe98ACOE,FonMul99AQLS,FonKru10OCGA,KriRin10CGGY,Rind11?CYMG,SzeWie12YMGI}. In particular, in~\cite{BeKrPa13YMSI} the result was recently extended to Young measures generated by sequences of \emph{invertible} gradients satisfying the uniform bound $\max \{ \abs{\nabla u}, \abs{\nabla u^{-1}} \} \leq \rho$ a.e.\ for some $\rho > 0$.

In physical applications, for sequences $(\nabla u_j) \subset \Lrm^p(\Omega;\R^{d \times d})$, one is often interested in the \emph{pointwise} constraint that the maps underlying the gradients be \term{strictly orientation-preserving}, that is,
\begin{equation} \label{eq:det_nabla_uj_pos}
  \det\, \nabla u_j(x) > 0  \qquad\text{a.e.\ in $\Omega$.}
\end{equation}
For example, in elasticity theory, orientation reversal and interpenetration of matter should be excluded by physical reasoning and hence one requires that admissible deformations in the relevant minimization problem are strictly orientation-preserving and injective almost everywhere. Of course, under regularity assumptions, the positivity of the Jacobian itself relates to (at least local) non-interpenetration of matter; however, for deformations of Sobolev regularity with exponent $p$ below the dimension, the positivity of the Jacobian is not even necessary for injectivity \cite{Hen11SobHomJ0} and this question lies outside the scope of the present work.

Nevertheless, the natural question of characterizing those Young measures that are generated by sequences of gradients of strictly orientation-preserving maps has so far remained open. The reason for the inherent difficulty of this question is the following: Suppose that $(u_j)\subset \Wrm^{1,p}(\Omega,\R^d)$ bounded, $u_j\rightharpoonup u$ in $\Wrm^{1,p}$ and $(\nabla u_j)$ generates the measure $\nu=(\nu_x)_{x\in\Omega}$ so that, in particular, $[\nu]=\nabla u$ a.e. (see (II) below). The proof of the Kinderlehrer--Pedregal Theorem is crucially based on modifying $(u_j)$ to get a new sequence $(v_j)$ such that $(\nabla v_j)$ still generates $\nu$ but $v_j-u\in W^{1,p}_0(\Omega,\R^d)$. This is achieved through standard cut-off techniques which, nevertheless, cannot preserve non-convex constraints such as the orientation-preserving condition. 



So let $(\nabla u_j) \subset \Lrm^p(\Omega;\R^{d \times d})$ generate a Young measure $\nu = (\nu_x)$ and satisfy~\eqref{eq:det_nabla_uj_pos}. Since we are dealing with a sequence of gradients, $\nu$ is a \term{gradient $p$-Young measure}, that is, the usual Kinderlehrer--Pedregal constraints hold:
\begin{itemize}
\item[(I)] $\displaystyle\int_\Omega \int\abs{A}^p \dd \nu_x(A) \dd x <\infty$.
\item[(II)] The barycenter $[\nu](x) := \int A \dd \nu_x(A)$ is a gradient, i.e. there exists $\nabla u \in \Lrm^p(\Omega;\R^{d \times d})$ with $[\nu] = \nabla u$ a.e.
\item[(III)] For every quasiconvex function $h \colon \R^{d \times d} \to \R$ with $\abs{h(A)}\leq c(1+\abs{A}^p)$, the Jensen-type inequality
\[
  \qquad h(\nabla u(x)) \leq  \int h(A) \dd \nu_x(A)  \qquad\text{holds for a.e.\ $x \in \Omega$.}
\]
\end{itemize}
In this context recall that a locally bounded mapping $h \colon \R^{d \times d} \to \R$ is called \term{quasiconvex} if
\[
  h(M) \leq \dashint_{B(0,1)} h(M + \nabla \psi(x)) \dd x
\]
for all $M \in \R^{d \times d}$ and all $\psi \in \Crm_c^\infty(B(0,1);\R^d)$ (compactly supported); the open unit ball $B(0,1)$ can equivalently be replaced by any other open set such that $\abs{\partial \Omega} = 0$. Details about quasiconvex functions can, for example, be found in~\cite{Daco08DMCV}. The function $u$ is called the \term{underlying deformation} of the Young measure.

On the other hand, it is not difficult to verify (see below for a proof) that~\eqref{eq:det_nabla_uj_pos} implies the following pointwise constraint:
\begin{itemize}
\item[(IV)] For a.e.\ $x \in \Omega$,
\[
  \qquad \supp{\nu_x} \in \setb{ M \in \R^{d \times d} }{\det\, M \geq 0}.
\]
\end{itemize}

This paper deals with the question of how, given a Young measure $\nu$ satisfying (I)--(IV), one can recover a sequence $(\nabla u_j) \subset \Lrm^p(\Omega;\R^{d \times d})$ generating $\nu$ such that~\eqref{eq:det_nabla_uj_pos} holds. In particular, we will prove the following main theorem (see below for a discussion on the restrictions on $p$):

\begin{theorem} \label{thm:main}
Let $\Omega \subset \R^d$ be open and bounded such that $|\partial\Omega|=0$, and $p \in (1,d)$. Furthermore, let $\nu = (\nu_x)_{x\in\Omega} \subset \Mbf^1(\R^{d \times d})$ be a $p$-Young measure with underlying deformation $u\in \Wrm^{1,p}(\Omega,\mathbb{R}^d)$. Then the following are equivalent:
\begin{itemize}
  \item[(i)] There exists a sequence of gradients $(\nabla u_j) \subset \Lrm^p(\Omega;\R^{d \times d})$ that generates $\nu$, such that all $\nabla u_j$ are strictly orientation-preserving, that is,
\[
  \qquad \det\, \nabla u_j > 0 \quad \text{a.e.}\qquad\text{for all $j\in\N$}.
\]
  \item[(ii)] The conditions (I)--(IV) hold.
\end{itemize}
Furthermore, if (I)--(IV) hold, the orientation-preserving sequence $(u_j)$ can be chosen so that $u_j-u\in\Wrm^{1,p}_0(\Omega,\R^d)$ and $(\nabla u_j)$ is $p$-equiintegrable.
\end{theorem}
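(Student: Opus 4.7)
The forward implication $(\mathrm{i}) \Rightarrow (\mathrm{ii})$ is standard. Properties (I)--(III) are the classical Kinderlehrer--Pedregal necessary conditions that any gradient $p$-Young measure generated by an $\Lrm^p$-bounded sequence satisfies. For (IV), choose any bounded continuous $g \colon \R \to \R$ with $g \leq 0$ and $g(t) = 0$ for $t \geq 0$. Then $M \mapsto g(\det M)$ is bounded and continuous on $\R^{d \times d}$, and $g(\det \nabla u_j) = 0$ almost everywhere by hypothesis. Applying the Fundamental Theorem on Young measures with this test function gives $\int g(\det A)\dd\nu_x(A) = 0$ for a.e.\ $x \in \Omega$; letting $g$ range over a countable family separating $(-\infty,0)$ then forces $\supp \nu_x \subset \{M \in \R^{d \times d} : \det M \geq 0\}$ a.e.

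For the reverse implication $(\mathrm{ii}) \Rightarrow (\mathrm{i})$ I would proceed in two stages. In Stage~1, invoke the classical Kinderlehrer--Pedregal theorem on conditions (I)--(III) to extract a $p$-equiintegrable generating sequence $(\nabla w_j) \subset \Lrm^p(\Omega; \R^{d \times d})$ with $w_j - u \in \Wrm^{1,p}_0(\Omega;\R^d)$ and $(\nabla w_j)\toY \nu$. This sequence carries the correct Young-measure statistics and boundary condition but generically fails the pointwise orientation-preserving constraint. Stage~2 modifies $(w_j)$ locally to enforce $\det \nabla u_j > 0$ a.e.\ without altering the generated measure.

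Stage~2 runs as follows. Partition $\Omega$ (up to a negligible set) into small disjoint balls on each of which $\nabla w_j$ is close to a constant matrix $M$. On balls where $\det M$ is safely positive, leave $w_j$ alone. On balls where $\det M$ is nonpositive or only marginally positive, replace $w_j$ by a piecewise affine map $Mx + \phi$ where $\phi$ is a Lipschitz function vanishing on the boundary of the ball, produced by a convex-integration step so that $\nabla(Mx + \phi) = M + \nabla \phi$ takes values in the open set $\{\det > 0\}$ while $\phi$ has zero mean and small $\Lrm^p$ amplitude. Because $\phi$ vanishes on the ball's boundary, the patched map inherits $w_j$'s boundary values across interfaces, preserving $u_j - u \in \Wrm^{1,p}_0(\Omega; \R^d)$. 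A diagonal extraction, combined with $p$-equiintegrability to prevent escape of $\Lrm^p$ mass and density of continuous test functions to control distributional deviations, then yields the required sequence generating $\nu$ and satisfying the orientation-preserving constraint almost everywhere.

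The main obstacle, which I expect to be the technical heart of the argument, is producing the perturbation $\phi$: given $M$ with $\det M = 0$, one must realise $M$ as an iterated rank-one laminate supported entirely in the open cone $\{\det > 0\}$. Naive first-order splittings $M = \tfrac12(M + t\,a \otimes b) + \tfrac12(M - t\,a \otimes b)$ generically place one branch in $\{\det < 0\}$, so the construction must cascade through higher-order laminates with amplitudes quantitatively controlled in $\Lrm^p$. The restriction $p < d$ is essential here: for $p \geq d$ the Jacobian is a weakly continuous null Lagrangian, forcing $\det \nabla u_j \rightharpoonup \det[\nu]$ in distributions and obstructing any oscillatory construction that would not already respect the support of $\nu$. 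Under $p < d$ the determinant loses this rigidity and explicit laminates into $\{\det > 0\}$ exist; verifying their construction and the attendant bounds is, as suggested by the abstract, the crux of the proof.
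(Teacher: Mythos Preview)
Your forward implication is fine and matches the paper. The reverse direction has the right spirit---Kinderlehrer--Pedregal plus a laminate-based local correction---but Stage~2 as written contains a genuine error and misses the iterative structure that makes the argument close.

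\textbf{The patching error.} You propose to replace $w_j$ on each bad ball $B$ by $Mx + \phi$ with $\phi \in \Wrm^{1,p}_0(B;\R^d)$. But then the trace of the patched map on $\partial B$ is $Mx$, not $w_j|_{\partial B}$; since $\nabla w_j$ is only close to $M$ in an $\Lrm^p$ sense, these traces do not agree, and the resulting map is not even in $\Wrm^{1,p}(\Omega;\R^d)$, let alone equal to $u$ on $\partial\Omega$. One could instead add $\phi$ to $w_j$, but then $\nabla(w_j+\phi) = \nabla w_j + \nabla\phi$ and the laminate is designed so that $M + \nabla\phi \in \{\det>0\}$, not $\nabla w_j + \nabla\phi$. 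This gap is exactly why cut-off arguments fail for nonconvex constraints, as the paper stresses in the introduction. The paper's remedy is to work at the level of Young measures: for each $x$ with $\det\nabla w_j(x)<0$ one \emph{defines} $\nu_x$ to be the $p$-laminate with barycenter $\nabla w_j(x)$, for other $x$ one sets $\nu_x = \delta_{\nabla w_j(x)}$, and then one invokes a refined Kinderlehrer--Pedregal lemma (Lemma~2.2 in the paper) to generate this inhomogeneous Young measure by a sequence with the correct boundary values. No explicit ball decomposition or trace-matching is needed.

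\textbf{The missing iteration and the key estimate.} Even with the Young-measure approach, a single correction cannot produce $\det>0$ a.e.: generating a laminate by actual gradients introduces new points where the determinant is negative or zero, and the set $\{\det>0\}$ is open, so laminates supported there do not close up under weak* limits. The paper runs a two-tier iteration: first (Section~4) an infinite cascade drives the sequence to $\det\geq 0$ a.e., then (Section~5) a second cascade using a different finite-order laminate (Proposition~5.1, supported on $\{|\det|\geq\delta^d\}$) pushes to $\det>0$ a.e., with a Borel--Cantelli argument to show the limit has strictly positive Jacobian. What makes the first iteration converge in $\Lrm^p$ is not just that the laminate has bounded $p$th moment, but the sharp estimate $\int |A - M_0|^p \dd\nu(A) \leq C_p |\det M_0|^{p/d}$ (Proposition~3.1(iv)): the correction is small precisely when the determinant defect is small, and since $\int_{\{\det\nabla w_j<0\}}|\det\nabla w_j|^{p/d}\dd x \to 0$ by the support condition (IV), the total $\Lrm^p$-perturbation is controlled. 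You do not identify this estimate, and without it the scheme does not obviously converge.
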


\begin{remark}
Imposing non-trivial constraints on generating sequences from the support of the measure, as in Theorem~\ref{thm:main}, is typical. For example, Zhang's Lemma \cite{Zhang92Lemma} gives $L^{\infty}$ bounds on generating sequences when the support is compact (see also M\"uller \cite{Muller00Zhang} for a refined version of Zhang's Lemma when the support lies in a compact, convex set). For non-convex constraints the situation is more complicated and the first treatment was given by Astala and Faraco \cite{AstFar02QR}. There it is shown that, in two dimensions, gradient Young measures supported on quasiregular sets can be generated by quasiregular mappings. In \cite{AstFar02QR}, as in our result, the quasiregular generating sequence lies in a Sobolev space with restricted exponent (see Section~\ref{sc:remarks} where the constraint $p<d$ is further discussed). We also refer the reader to \cite{BeKrPa13YMSI} (already mentioned) where the authors investigate measures supported on invertible matrices.
\end{remark}

Interestingly, also for weakly orientation-preserving maps (such that only $\det\, \nabla u_j \geq 0$ a.e.) we get the same result. So, as concerning Young measures, these two classes of generating sequences are interchangeable. More specifically, our Theorem~\ref{thm:main} also immediately yields the following corollary, expressing that strictly orientation-preserving deformations are $\Wrm^{1,p}$-dense in the set of weakly orientation-preserving deformations:

\begin{corollary}
Let $\Omega \subset \R^d$ be open and bounded with Lipschitz boundary and $p \in [1,d)$. Let $u \in \Wrm^{1,p}(\Omega;\R^d)$ be weakly orientation-preserving, i.e.
\[
  \det\, \nabla u \geq 0 \quad \text{a.e.}
\]
Then, there exists a sequence $(v_j) \subset \Wrm^{1,p}(\Omega;\R^d)$ that is strictly orientation-preserving, that is
\[
  \det\, \nabla v_j > 0 \quad \text{a.e.}\qquad\text{for all $j \in \N$,}
\]
and such that $\norm{v_j-u}_{1,p} \to 0$ as $j\to\infty$.
\end{corollary}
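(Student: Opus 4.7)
The plan is to reduce the corollary to Theorem~\ref{thm:main} by invoking it for the elementary (Dirac) Young measure $\nu = (\nu_x)_{x \in \Omega}$ defined by $\nu_x := \delta_{\nabla u(x)}$ for a.e.\ $x \in \Omega$. First I would verify that $\nu$ satisfies (I)--(IV): (I) reduces to $\nabla u \in \Lrm^p(\Omega;\R^{d\times d})$; (II) identifies the underlying deformation as $u$ itself; (III) holds trivially because Jensen's inequality on a Dirac mass is an equality (for \emph{every} function, not only quasiconvex ones); and (IV) is precisely the standing hypothesis $\det\,\nabla u \geq 0$ a.e.

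For $p \in (1,d)$, Theorem~\ref{thm:main} then produces a sequence $(u_j) \subset \Wrm^{1,p}(\Omega;\R^d)$ with $u_j - u \in \Wrm^{1,p}_0(\Omega;\R^d)$ and $\det\,\nabla u_j > 0$ a.e., such that $(\nabla u_j)$ is $p$-equiintegrable and generates $\nu$. Since each $\nu_x$ is a Dirac mass, testing against the bounded Carath\'eodory function $f(x,A) := \min\{|A - \nabla u(x)|, 1\}$ in the Fundamental Theorem for Young measures yields $\nabla u_j \to \nabla u$ in measure. Combined with the $p$-equiintegrability of $|\nabla u_j - \nabla u|^p$ (which follows from that of $|\nabla u_j|^p$ and the integrability of $|\nabla u|^p$), Vitali's convergence theorem gives $\nabla u_j \to \nabla u$ strongly in $\Lrm^p$. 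Because $u_j - u \in \Wrm^{1,p}_0(\Omega;\R^d)$ and $\Omega$ has Lipschitz boundary, the Poincar\'e inequality then gives $u_j \to u$ in $\Wrm^{1,p}$, and $v_j := u_j$ is the desired sequence.

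The endpoint case $p = 1$ is not directly covered by Theorem~\ref{thm:main} and needs a separate approximation argument: approximate $u \in \Wrm^{1,1}(\Omega;\R^d)$ in the $\Wrm^{1,1}$-norm by maps in $\Wrm^{1,q}(\Omega;\R^d)$ for some fixed $q \in (1,d)$ that remain weakly orientation-preserving, then apply the $p \in (1,d)$ case to each such approximation and diagonalize. The subtlety here is that the intermediate approximation must preserve the sign of the Jacobian, which rules out naive mollification; a careful truncation/rescaling of $\nabla u$ is required. The genuine difficulty of the corollary, however, is already absorbed into Theorem~\ref{thm:main}: once the theorem is at hand, what remains is the standard fact that a $p$-equiintegrable sequence generating a Dirac Young measure converges strongly in $\Lrm^p$ to its barycenter, together with a Poincar\'e inequality.
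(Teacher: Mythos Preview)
Your approach is correct and is essentially the one the paper has in mind: apply Theorem~\ref{thm:main} to the Dirac Young measure $\nu_x = \delta_{\nabla u(x)}$, use that a $p$-equiintegrable generating sequence for a Dirac measure converges strongly in $\Lrm^p$, and upgrade to $\Wrm^{1,p}$-convergence via the Poincar\'e--Friedrichs inequality (this is precisely what the paper calls ``elementary arguments''). For $p=1$ the paper likewise only asserts validity without giving details, so your flagging of the additional approximation step is appropriate and does not diverge from the paper's treatment.
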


Note that here we get an approximation in $\Wrm^{1,p}$, i.e.\ for the primitives as well; this follows directly from the Poincar\'{e}--Friedrichs inequality (requiring a regularity for the boundary of $\Omega$) and elementary arguments. Note that, as stated, the above result is also valid for $p=1$.  

A further application of Theorem~\ref{thm:main} is the relaxation of integral functionals in the class of orientation-preserving deformations: 

\begin{theorem}\label{thm:relaxationintro}
Suppose that $\Omega\subset\mathbb{R}^d$ is a bounded Lipschitz domain, $p\in(1,d)$, $\bar{u}\in \Wrm^{1,p}(\Omega,\R^d)$, and let $f: \Omega \times \mathbb{R}^{d\times d}\rightarrow\mathbb{R}$ be a Carath\'{e}odory function satisfying
\[
c(\abs{A}^p-1)\leq f(x,A)\leq C(1+\abs{A}^p)
\]
for all $(x,A)\in\Omega\times\mathbb{R}^{d\times d}$ and constants $0<c\leq C$. Then,
\[
\inf_{\mathcal{A}}\, \int_{\Omega}f(x,\nabla u(x))\dd x=\min_{\mathcal{A}^{YM}}\, \int_{\Omega}\int f(x,A)\dd \nu_x(A)\dd x,
\]
where 
\[
\mathcal{A}:=\setb{u\in \Wrm^{1,p}(\Omega,\mathbb{R}^d)}{u\vert_{\partial\Omega}=\bar{u},\,\det\, \nabla u(x)>0\mbox{ a.e.}},
\]
\[
\mathcal{A}^{YM}:=\setb{\nu\mbox{ gradient $p$-Young measure}}{ \supp \nu_x\subset\{\det\, M\geq 0\},\ [\nu]=\nabla u, u\vert_{\partial\Omega}=\bar{u}}.
\]
\end{theorem}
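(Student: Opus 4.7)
The plan is to handle the two inequalities separately and then extract a minimizer from a minimizing sequence on the Sobolev side via a Young measure compactness argument.

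\textbf{The easy inequality $\inf_{\mathcal{A}} \geq \inf_{\mathcal{A}^{YM}}$.} For any $u \in \mathcal{A}$, the elementary Young measure $\nu_x := \delta_{\nabla u(x)}$ satisfies (I)--(IV) trivially (condition (III) is Jensen's inequality on a Dirac mass, and (IV) uses $\det\,\nabla u > 0$ a.e.), so $\nu \in \mathcal{A}^{YM}$ and
\[
\int_\Omega f(x,\nabla u(x))\dd x = \int_\Omega\!\!\int f(x,A)\dd\nu_x(A)\dd x.
\]

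\textbf{The hard inequality $\inf_{\mathcal{A}} \leq \inf_{\mathcal{A}^{YM}}$.} This is where Theorem~\ref{thm:main} enters. Given $\nu \in \mathcal{A}^{YM}$ with underlying deformation $u$ (so $u|_{\partial\Omega} = \bar u$), the theorem supplies a sequence $(u_j)$ with $u_j - u \in \Wrm^{1,p}_0(\Omega;\R^d)$ (hence $u_j \in \mathcal{A}$), $\det\,\nabla u_j > 0$ a.e., and $(\nabla u_j)$ generating $\nu$ in a $p$-equiintegrable fashion. The $p$-growth bound on $f$ upgrades $p$-equiintegrability of $(\nabla u_j)$ to plain equiintegrability of $(f(\sbullet,\nabla u_j))$, so the Fundamental Theorem for Young measures yields
\[
\int_\Omega f(x,\nabla u_j(x))\dd x \;\tolong\; \int_\Omega\!\!\int f(x,A)\dd \nu_x(A)\dd x.
\]
Therefore $\inf_{\mathcal{A}}\int_\Omega f(x,\nabla u)\dd x \leq \int_\Omega\!\!\int f\dd\nu_x\dd x$, and taking infimum over $\nu \in \mathcal{A}^{YM}$ gives the claim.

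\textbf{Attainment.} Pick a minimizing sequence $(u_k) \subset \mathcal{A}$ for the left-hand infimum. Coercivity $f(x,A) \geq c(\abs{A}^p - 1)$ together with the Poincar\'{e}--Friedrichs inequality (using the fixed Dirichlet datum $\bar u$) makes $(u_k)$ bounded in $\Wrm^{1,p}$. After passing to a subsequence, $u_k \toweak u^*$ weakly in $\Wrm^{1,p}$ with $u^* - \bar u \in \Wrm^{1,p}_0$, and $(\nabla u_k)$ generates a gradient $p$-Young measure $\nu^*$ with barycenter $\nabla u^*$. Since $\set{M \in \R^{d \times d}}{\det\,M \geq 0}$ is closed and contains each $\nabla u_k(x)$ a.e., $\supp\,\nu^*_x$ lies in it for a.e.\ $x$; hence $\nu^* \in \mathcal{A}^{YM}$. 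Applying the lower semicontinuity part of the Fundamental Theorem to the nonnegative integrand $f + c$,
\[
\int_\Omega\!\!\int f(x,A)\dd \nu^*_x(A)\dd x \;\leq\; \liminf_{k\to\infty}\int_\Omega f(x,\nabla u_k(x))\dd x = \inf_{\mathcal{A}},
\]
which together with the two inequalities above identifies $\nu^*$ as a minimizer.

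The only nontrivial input is Theorem~\ref{thm:main}, invoked in the hard inequality; everything else is a standard Young-measure relaxation argument, with the only genuine subtlety being that the minimum exists on $\mathcal{A}^{YM}$ rather than on $\mathcal{A}$ itself, reflecting the possible development of microstructure for non-quasiconvex integrands.
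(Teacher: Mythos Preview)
Your proof is correct and follows essentially the same route as the paper's: the easy inequality via the elementary Young measure $\delta_{\nabla u}$, the hard inequality by invoking Theorem~\ref{thm:main} to produce a $p$-equiintegrable orientation-preserving generating sequence and passing to the limit using the $p$-growth of $f$, and attainment via Young-measure compactness and lower semicontinuity. The only cosmetic difference is that the paper asserts existence of a minimizer in $\mathcal{A}^{YM}$ by a one-line appeal to ``standard arguments'' and then works from that minimizer, whereas you construct the minimizer explicitly from an infimizing sequence in $\mathcal{A}$; both are valid and equivalent.
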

To the best of the authors' knowledge, relaxation results under the strict orientation-preserving constraint do not exist in the literature and its proof can be found in Section~\ref{sc:relaxation} below (see \cite{AnHMan09RTNE} for a relaxation theorem under the constraint $\det M\neq 0$, $p\in(1,\infty)$). We note that, due to the restriction $p<d$, one cannot expect that $u\in\mathcal{A}$ in the definition of $\mathcal{A}^{YM}$.

Returning to Theorem~\ref{thm:main}, we observe that one direction is straightforward:

\begin{proof}[Proof of \enquote{(i) $\Rightarrow$ (ii)} in Theorem~\ref{thm:main}]
The conditions (I)--(III) follow from standard arguments, this is the easy part of the Kinderlehrer--Pedregal Theorem, see~\cite{KinPed91CYMG,KinPed94GYMG,Pedr97PMVP}. For (IV), take $\phi \in \Crm_c(\Omega)$ and $h \in \Crm_c(\R^{d \times d})$ such that $\supp h \subset\subset \setn{ M \in \R^{d \times d} }{ \det\, M < 0 }$. Then,
\[
  \int_\Omega \phi(x) \int h \dd \nu_x \dd x = \lim_{j\to\infty} \int_\Omega \phi h(\nabla u_j) \dd x = 0
\]
by the assumptions on $\nabla u_j$. Varying $\phi$, we get
\[
  \int h \dd \nu_x = 0  \qquad\text{for a.e.\ $x \in \Omega$.}
\]
Since this holds for all $h$ as above, $\supp \nu_x \subset \setn{ M \in \R^{d \times d} }{ \det\, M \geq 0 }$ for a.e.\ $x \in \Omega$.
\end{proof}

The bulk of this paper is devoted to proving the other implication. For the purpose of illustration, assume for the moment that $p \geq d$. Then, if we take $(\nabla u_j)$ as constructed in the Kinderlehrer--Pedregal Theorem, the characteristic feature of Young measures allows to represent the nonlinear limit of $\det\, \nabla u_j$,
\begin{equation} \label{eq:approx_orientpres}
  \det\, \nabla u_j  \;\;\toweak\;\;  0  \qquad\text{weakly in $\Lrm^1$}.
\end{equation}
This means that the sequence is \enquote{asymptotically orientation-preserving}. It remains to make it \emph{exactly} orientation preserving for every $j \in \N$.

Our strategy is in the spirit of the technique of \term{convex integration} \cite{Grom86PDR,EliMis02IH,MulSve03CILM,Kirc03RGM,AFS08CI&EllipticEqs,DeLSze12HEFD}, but there are some differences. First, we show a result about the \enquote{geometry} of the set $\setn{ M \in \R^{d \times d} }{ \det\, M = 0 }$: Given any matrix $M_0$ with $\det\, M_0<0$, it is always possible to construct an infinite-order $p$-laminate (definition see Section~\ref{sc:geometry}) with barycenter $M_0$ that is supported in the set of matrices with zero determinant. Second, using several iterative steps, we prove a general convergence principle that allows us to improve a generating sequence that is  \enquote{asymptotically  orientation-preserving} in the sense of~\eqref{eq:approx_orientpres} to one that consists only of weakly orientation-preserving gradients, see Section~\ref{sc:conv_weakly_op}. Finally, in Section~\ref{sc:conv_strictly_op} we use yet another iteration to improve this into a sequence of strictly orientation-preserving gradients. All the perturbations and corresponding estimates in our iteration process are obtained \enquote{softly} by repeated use of the fact that laminates are gradient Young measures, rather than by explicit construction (but, of course, the laminates themselves are explicitly constructed in the first step of our proof).  

At this point we remark that the \enquote{classical} convex integration arguments are not directly applicable because laminar oscillations can only give weakly orientation-preserving mappings (note that our condition $\det\, A>0$ defines an \emph{open} set in matrix space). Also, since the $p$-growth condition turns out to be crucial, one could speak of \enquote{$p$-convex integration} for this variant and, further, our convergence principle is different from the one usually employed in convex integration as it involves different generalized convex hulls, see Section~\ref{sc:remarks} for details. This gives rise to an application of convex integration which distinguishes between different degrees of integrability (rather than differentiability), a phenomenon that has, to the authors' knowledge, previously appeared only in~\cite{AFS08CI&EllipticEqs} and work of Yan~\cite{Yan96RemarksStability,Yan01LinearBVP,Yan03Baire} where convex integration techniques are employed for unbounded sets using laminar constructions resembling those of the present work. Indeed, convex integration typically shows flexibility below a certain threshold regularity, whereas at higher regularity the situation is rigid. This is the case e.g. for isometric imbeddings of manifolds~\cite{CoDeSz12HRCI} or incompressible fluid flows~\cite{DeLSze12HEFD}, where convex integration gives flexibility in certain H\"{o}lder spaces. In our case, the threshold integrability is $p=d$ and we show flexibility for $p<d$ and rigidity for $p\geq d$ (cf. Section~\ref{sc:remarks}).

The paper is organized as follows: In the first section we recall preliminary results about Young measures, then in Sections~\ref{sc:geometry}--\ref{sc:conv_strictly_op} we prove the implication \enquote{(ii) $\Rightarrow$ (i)} of Theorem~\ref{thm:main}. Section~\ref{sc:relaxation} is concerned with the proof of Theorem~\ref{thm:relaxationintro}. Finally, we discuss the constraint $p<d$, convex integration, and generalized convex hulls in Section~\ref{sc:remarks}.

\subsection*{Acknowledgments}

The authors wish to thank John Ball, Daniel Faraco, Duvan Henao, Jan Kristensen, Martin Kru\v{z}\'{i}k, and Angkana R\"{u}land for discussions related to the present paper. KK was supported by the European Research Council grant agreement ${\rm n^o}$ 291053.

\section{Gradient Young measures}\label{sc:gradient Young measure}

In all of the following, we use the Frobenius norm, which turns out to be crucial for some estimates. This norm is defined for a matrix $M = (M^i_j) \in \R^{d \times d}$ as follows:
\begin{equation} \label{eq:Frobenius}
  \abs{M} = \abs{M}_F := \left[ \sum_{i,j=1}^d (M^i_j)^2 \right]^{1/2}
  = \left[ \sum_{k=1}^d \sigma_k^2 \right]^{1/2},
\end{equation}
where $\sigma_k$, $k=1,\ldots,d$ are the singular values of $M$, i.e.\ the square roots of the eigenvalues of $M^T M$ or, equivalently, $MM^T$.

Let $\Omega \subset \R^d$ be an open set. A family $(u_j)_{j\in J}\subset \Lrm^p(\Omega)$ is said to be \term{$p$-equiintegrable} if $(|u_j|^p)$ is equiintegrable, i.e. if

\[
\lim_{R\to\infty}\sup_{j\in J}\int_{\{|u_j|^p>R\}}|u_j(x)|^p\dd x=0.
\]

A \term{$p$-Young measure}, $1 \leq p \leq \infty$, is a parametrized family $\nu = (\nu_x)_{x \in \Omega} \subset \Mbf^1(\R^N)$ of probability measures on $\R^N$ (which are collected in the set $\Mbf^1(\R^N)$) such that the following conditions are satisfied:
\begin{enumerate}
\item[(1)] The family $(\nu_x)$ is \term{weakly* measurable}, that is, for every Borel set $B \subset \R^N$ the map $x \mapsto \nu_x(B)$ is ($\Lcal^d \restrict \Omega$)-measurable.
\item[(2)] The map $x \mapsto \int \abs{A}^p \dd \nu_x$ lies in $\Lrm^1(\Omega)$.
\end{enumerate}
We say that a sequence $(u_j)$, bounded in $L^p(\Omega;\R^N)$, \term{generates} the Young measure $\nu$ if 
\[
  \int_\Omega f(x,u_j(x)) \dd x  \;\;\to\;\;
  \int_\Omega \int f(x,A) \dd \nu_x(A) \dd x 
  \]
for every Carath\'{e}odory function $f:\Omega\times\R^N\to \R$ (i.e. every function which is measurable in the first and continuous in the second argument) such that $(f(\frarg,u_j))$ is equiintegrable. In this case we write $u_j\toY \nu$.

We have the following lemma, which expresses a well-known fact:
\begin{lemma}\label{lem:pclosegeneration}
Suppose $(u_j)$ and $(v_j)$ are $\Lrm^p(\Omega)$-bounded sequences, $(u_j)$ generates the Young measure $\nu$ and 
\[
\lim_{j\to\infty}\norm{u_j-v_j}_p=0.
\]
Then also $(v_j)$ generates $\nu$.
\end{lemma}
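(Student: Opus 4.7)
The plan is to exploit the fact that generation of a Young measure is characterized by testing against product functions of the form $\phi(x)g(A)$, where $\phi\in\Crm_c(\Omega)$ and $g\in\Crm_0(\R^N)$, and that $\Lrm^p$-closeness interacts well with bounded uniformly continuous $g$ via convergence in measure.

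First I would note that $\norm{u_j-v_j}_p\to 0$ implies $u_j-v_j\to 0$ in measure on $\Omega$ (true for every $p\in[1,\infty]$). Next, fix $\phi\in\Crm_c(\Omega)$ and $g\in\Crm_0(\R^N)$; since $g$ is bounded and uniformly continuous, it follows that $g(u_j)-g(v_j)\to 0$ in measure on $\Omega$, and this difference is uniformly bounded by $2\norm{g}_\infty$. Dominated convergence then yields
\[
\lim_{j\to\infty}\int_\Omega \phi(x)\bigl(g(u_j(x))-g(v_j(x))\bigr)\dd x \;=\; 0.
\]
On the other hand, $(x,A)\mapsto \phi(x)g(A)$ is a Carath\'{e}odory function and the sequence $(\phi\,g(u_j))$ is uniformly bounded, hence equiintegrable, so from $u_j\toY\nu$ we have
\[
\int_\Omega \phi(x) g(u_j(x))\dd x \;\tolong\; \int_\Omega \phi(x)\int_{\R^N} g(A)\dd \nu_x(A)\dd x.
\]
Combining the two displays, the same limit holds with $u_j$ replaced by $v_j$.

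To turn this into the full Young measure conclusion, I would take an arbitrary subsequence of $(v_j)$; since it is $\Lrm^p$-bounded, the Fundamental Theorem for Young measures provides a further subsequence generating some $p$-Young measure $\mu=(\mu_x)$. The previous step then gives
\[
\int_\Omega \phi(x)\int g\dd\mu_x\dd x \;=\; \int_\Omega \phi(x)\int g\dd\nu_x\dd x
\]
for all $\phi\in\Crm_c(\Omega)$ and $g\in\Crm_0(\R^N)$. Varying $\phi$ and ranging $g$ over a countable dense subset of $\Crm_0(\R^N)$ shows $\mu_x=\nu_x$ for a.e.\ $x\in\Omega$. Since every subsequence of $(v_j)$ admits a further subsequence generating $\nu$, the whole sequence $(v_j)$ generates $\nu$ in the sense that convergence holds for product test functions in $\Crm_c(\Omega)\otimes \Crm_0(\R^N)$.

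The remaining (essentially routine) step is to extend convergence from such product test functions to arbitrary Carath\'{e}odory $f$ with $(f(\frarg,v_j))$ equiintegrable. This is done by a standard truncation-and-approximation procedure (cutting off $f$ at level $R$ via equiintegrability, then approximating the truncation uniformly on compacta by tensor products through a Scorza--Dragoni argument), and I would simply refer to \cite{Pedr97PMVP,Mull99VMMP}. The only mildly delicate point in the whole proof is that uniform continuity of $g\in\Crm_0(\R^N)$, rather than mere continuity, is what allows convergence in measure of $u_j-v_j$ to be promoted to convergence in measure of $g(u_j)-g(v_j)$; this is the reason $\Crm_0(\R^N)$ is the correct test class. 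No genuine obstacle arises.
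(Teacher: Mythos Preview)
Your argument is correct and is precisely the standard route to this well-known fact. Note, however, that the paper does not actually supply a proof of Lemma~\ref{lem:pclosegeneration}: it simply states the result as ``a well-known fact'' and moves on. So there is nothing to compare against beyond observing that what you wrote is the expected justification---convergence in measure of $u_j-v_j$, uniform continuity of $g\in\Crm_0(\R^N)$ to transfer this to $g(u_j)-g(v_j)$, a subsequence/uniqueness argument via the Fundamental Theorem, and a reference to the literature for the passage to general Carath\'{e}odory integrands. One small cosmetic point: when you invoke dominated convergence, you are implicitly using that $\phi$ has compact support so the integration is over a set of finite measure; this is fine, but worth making explicit if $\Omega$ is allowed to be unbounded.
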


We denote the \term{barycenter} of a $p$-Young measure $\nu$ by
\[
  [\nu](x) := \int A \dd \nu_x(A),  \qquad x \in \Omega,
\]
and $[\nu]$ lies in $\Lrm^p(\Omega;\R^N)$ by condition~(2) above. A Young measure $\nu$ is called \term{homogeneous} if $x \mapsto \nu_x$ is an almost everywhere constant map, i.e.\ $\nu_x = \nu \in \Mbf^1(\R^N)$ for a.e.\ $x \in \Omega$. Many properties of Young measures are collected in~\cite{Pedr97PMVP}; for example, it can be shown that all $p$-Young measures according to the above definition are generated by some sequence of uniformly $\Lrm^p$-bounded functions with values in $\R^N$.

We say that a sequence of $p$-Young measures $\nu^j$ \term{converges weakly*} to a $p$-Young measure $\nu$, in symbols $\nu^j \toweakstar \nu$ or $\nu^j\toY \nu$, if
\[
  \int_\Omega \int f(x,A) \dd \nu^j_x(A) \dd x  \;\;\to\;\;
  \int_\Omega \int f(x,A) \dd \nu_x(A) \dd x 
  \qquad\text{for all $f \in \Crm_c^\infty(\Omega \times \R^N)$.}
\]
Clearly, for homogeneous Young measures, the weak* convergences in the sense of Young measures and in the sense of (ordinary) measures coincide.

A very important subclass of Young measures is the class of those that can be generated by sequences of \emph{gradients}, the so-called \term{gradient Young measures} (in this work we will only be concerned with maps $u \colon \Omega \subset \R^d \to \R^d$, whereby for our Young measures $\R^N = \R^{d \times d}$). The fundamental result in this context is the Kinderlehrer--Pedregal Theorem~\cite{KinPed91CYMG,KinPed94GYMG} already mentioned in the introduction: A $p$-Young measure is generated by a sequence of uniformly $\Lrm^p$-bounded gradients if and only if the conditions (I)--(III) from the introduction are satisfied. We call such measures \term{gradient $p$-Young measures}.

The following lemma, which will be used at various steps in the proof of Theorem \ref{thm:main}, is an easy consequence of the proof of this characterization of gradient $p$-Young measures:

\begin{lemma}\label{lem:KinPedrefined} 
Let $\Omega$ be a bounded domain with $\vert\partial\Omega\vert=0$ and let $\left(\nu_x\right)_{x\in\Omega}$ be a gradient $p$-Young measure, $p\in (1,\infty)$, with barycenter $[\nu] = \nabla u$ a.e.\ for some $u \in \Wrm^{1,p}(\Omega;\R^d)$. Set
\[
  S = \setb{x\in\Omega}{\nu_x=\delta_{\nabla u(x)}}.
\]
Then, there exists a generating sequence $(\nabla u_j)\subset (\Lrm^p \cap \Crm^{\infty})(\Omega;\R^{d \times d})$ for $\nu$ which is $p$-equiintegrable and such that
\begin{equation}\label{eq:almostequal}
\lim_{j\rightarrow\infty}\, \absb{\setb{x\in S}{\nabla u_j(x)\neq\nabla u(x)}} =0.
\end{equation}
In addition, $(u_j) \subset \Wrm^{1,p}(\Omega;\R^d)$ can be chosen so as to also satisfy $u_j-u\in\Wrm^{1,p}_0(\Omega,\R^d)$.
\end{lemma}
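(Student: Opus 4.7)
The only feature beyond the classical Kinderlehrer--Pedregal theorem is the asymptotic pointwise equality \eqref{eq:almostequal} on the set $S$ where $\nu$ is a Dirac mass, so the plan is to rerun the standard construction in a form that returns exactly $u$ on large subsets of $S$. I would first invoke a version of the Kinderlehrer--Pedregal theorem in its customary refined form (see~\cite{KinPed94GYMG,Pedr97PMVP,KriRin10CGGY}) to obtain a preliminary generating sequence $(\tilde u_j)\subset \Wrm^{1,p}(\Omega;\R^d)$ with $\tilde u_j-u\in \Wrm^{1,p}_0$, smooth gradients, and $p$-equiintegrable $\nabla\tilde u_j$; this already covers every requirement except \eqref{eq:almostequal}. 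Applying the Young-measure identity to the Carath\'eodory integrand $f(x,A):=\abs{A-\nabla u(x)}^p\ONE_S(x)$, whose composition $f(\sbullet,\nabla\tilde u_j)$ is equiintegrable thanks to $p$-equiintegrability of the sequence together with $\nabla u\in \Lrm^p$, yields
\[
  \int_S\abs{\nabla\tilde u_j-\nabla u}^p\dd x\;\longrightarrow\;\int_S\int\abs{A-\nabla u(x)}^p\dd\nu_x(A)\dd x\;=\;0,
\]
so $\nabla\tilde u_j\to\nabla u$ strongly in $\Lrm^p(S;\R^{d\times d})$.

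To upgrade strong $\Lrm^p$-convergence to pointwise equality on a large subset, I would redo the K--P construction cube by cube. For each $n$ I fix, by Lusin's theorem for Sobolev functions, a smooth $\phi_n\in\Crm^\infty(\Omega;\R^d)$ and a closed set $F_n\subset\Omega$ with $\abs{\Omega\setminus F_n}<1/n$ on which $u=\phi_n$ and $\nabla u=\nabla\phi_n$ a.e. I then partition $\Omega$ (up to a null set) into dyadic cubes $Q_k^n$ of side $\leq 1/n$ and call $Q_k^n$ \emph{trivial} if $\abs{Q_k^n\cap S\cap F_n}\geq(1-1/n)\abs{Q_k^n}$; Lebesgue differentiation applied to $\ONE_{S\cap F_n}$ ensures that the non-trivial cubes intersect $S$ in a set of measure $\eta_n\to 0$. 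On each trivial cube I take the generator to be simply $\phi_n$, so that $\nabla u_j^n=\nabla\phi_n=\nabla u$ a.e.\ on $Q_k^n\cap F_n$ with a smooth representative. On each non-trivial cube I apply the homogeneous K--P construction on $Q_k^n$ with boundary value $u|_{\partial Q_k^n}$, obtained from the standard version with affine boundary $A_k^n\cdot(\sbullet-x_k^n)$ (for $A_k^n:=\dashint_{Q_k^n}\nabla u$) by adding the fluctuation $u-A_k^n\cdot(\sbullet-x_k^n)$, whose gradient $\nabla u-A_k^n$ has $\Lrm^p(Q_k^n)$-norm vanishing as $n\to\infty$ by another Lebesgue differentiation. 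Since all cube pieces share the common trace $u|_{\partial Q_k^n}$, they glue to $u_j^n\in\Wrm^{1,p}(\Omega;\R^d)$ with $u_j^n-u\in\Wrm^{1,p}_0$ and a smooth gradient.

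A diagonal extraction $j=j(n)\to\infty$, together with Lemma~\ref{lem:pclosegeneration} to absorb the $o(1)$ $\Lrm^p$-discrepancy coming from the boundary correction, then yields the desired sequence $(u_j)$: it generates $\nu$, has smooth $p$-equiintegrable gradients, lies in $u+\Wrm^{1,p}_0(\Omega;\R^d)$, and satisfies
\[
  \abs{\{x\in S:\nabla u_j(x)\neq\nabla u(x)\}}\;\leq\;\abs{S\setminus F_n}+\abs{S\cap(\text{non-trivial cubes})}\;\longrightarrow\;0.
\]
The hard part is the cube-by-cube step: adapting the standard homogeneous Kinderlehrer--Pedregal construction so that it accepts the non-affine boundary value $u|_{\partial Q_k^n}$, and controlling the resulting Young-measure error in terms of $\norm{\nabla u-A_k^n}_{\Lrm^p(Q_k^n)}$. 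Once that piece is set up, Lebesgue differentiation, inner regularity of $S$, and the diagonal extraction assemble to the conclusion.
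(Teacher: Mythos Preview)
Your strategy---localize, keep the generating sequence equal to $u$ on $S$-rich regions, and run Kinderlehrer--Pedregal on the complement---is in the right spirit, but the execution has a genuine gap at the gluing step, and the approach differs substantially from the paper's.

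The claim ``all cube pieces share the common trace $u|_{\partial Q_k^n}$'' is false as written. On a trivial cube you put $\phi_n$, whose trace on $\partial Q_k^n$ is $\phi_n|_{\partial Q_k^n}$, and this equals $u|_{\partial Q_k^n}$ only when $\partial Q_k^n$ lies (trace-wise) inside $F_n$, which you have not arranged. Hence the pieces do not glue to a function in $u+\Wrm^{1,p}_0(\Omega;\R^d)$. A related problem is smoothness: even if you repair the traces with a boundary-layer correction, a piecewise-on-cubes construction will not give $\nabla u_j\in\Crm^\infty(\Omega)$ across cube faces, and any final mollification would destroy the exact pointwise equality $\nabla u_j=\nabla u$ on $S\cap F_n$ that you need. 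You have also misidentified the ``hard part'': on each non-trivial cube $Q_k^n$, the restriction $(\nu_x)_{x\in Q_k^n}$ is already a gradient $p$-Young measure with barycenter $\nabla u$, so the \emph{inhomogeneous} Kinderlehrer--Pedregal theorem directly produces a generating sequence with boundary datum $u|_{\partial Q_k^n}$; no detour through affine boundary values and added fluctuations is needed. The genuine difficulty is the gluing and global smoothness, not the cube-level generation.

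The paper avoids all of this by never gluing. After shifting to $\nabla u\equiv 0$, it separates $S$ from the rest not in physical space but in measure space: cover $\Mbf^1(\R^{d\times d})$ by finitely many small weak$^*$ balls, let $E_1^j$ be the preimage under $x\mapsto\nu_x$ of the ball containing $\delta_0$ (so $S\subset E_1^j$), approximate $E_1^j$ and its complement from inside by disjoint compacts, and enclose them in disjoint open sets $U_1^j,U_2^j$. One then takes a single $p$-equiintegrable smooth generating sequence $(\nabla v_k)$ for $\nu$ and multiplies it by a smooth cut-off supported in $U_2^j$; the result is globally smooth, vanishes on $U_1^j\supset K_1^j$ (hence equals $\nabla u=0$ on most of $S$), has zero boundary values, and a diagonal choice in $j,k$ and the cut-off parameter recovers $\nu$ with controlled $p$-equiintegrability. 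The single global cut-off is what buys both the smoothness and the correct boundary behaviour without any trace matching.
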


\begin{proof}
By a standard shifting argument, we may assume that $\nabla u\equiv 0$ and, without loss of generality, that $u\equiv 0$. Fix $j\in\mathbb{N}$. Since the space $\Mbf^1(\R^{d \times d})$ of probability measures is compact and metrizable with respect to the weak* topology, we may cover it with finitely many weakly* closed balls $B^j_i \subset \Mbf(\R^{d \times d})$, $i=1,\ldots, N_j$, of radius $1/(2j)$. We can view $\nu$ as a measurable map from $\Omega$ into the set $\Mbf^1(\R^{d \times d})$ and hence we may define measurable subsets of $\Omega$ by $\tilde{E}^j_i:=\nu^{-1}(B^j_i)$. In particular, we may enumerate the sets $B^j_i$ in such a way that $\delta_0\in\tilde{E}^j_1$ for all $j\in\mathbb{N}$. Note that then $S\subset\tilde{E}^j_1$.

Next, define a family of disjoint measurable sets $E^j_1,\ldots, E^j_{N_j}$ by setting $E^j_1=\tilde{E}^j_1$ and $E^j_i=\tilde{E}^j_i\setminus (E^j_1\cup\ldots\cup E^j_{i-1})$ for $i\geq 2$. Let
\[
F^j:=\bigcup^{N_j}_{i=2}E^j_i.
\]
We may assume (up to a subsequence in $j$) that $\vert F^j\vert >0$, as otherwise, $\vert F^j\vert =0$ for all $j$ implies that $\Omega = S\cup N$, where $N$ is a null set and the constant sequence $\nabla u_j\equiv 0$ would suffice to prove our result.

Then we may find compact sets $K^j_1\subset E^j_1$ and $K^j_2\subset F^j$ (if $|E^j_1|=0$, set $K^j_1=\emptyset$) such that
\begin{equation}\label{eq:compactapprox}
\vert E^j_1\setminus K^j_1\vert + \vert F^j\setminus K^j_2\vert < \frac{1}{j}.
\end{equation}
Finally, since the distance between $K^j_1$ and $K^j_2$ is positive, we may choose disjoint open sets $U^j_1$ and $U^j_2$ such that $K^j_i\subset U^j_i$ and $\vert\partial U^j_i \vert =0$ for $i=1,2$. For a.e.~$x\in\Omega$, define the measures
\[
\nu ^j_x= \begin{cases} \nu_x, & \text{if $x\in U^j_2$}\\
  \delta_0, &\text{otherwise.}
\end{cases}
\]
For each $j\in\N$, $\nu^j$ is a gradient $p$-Young measure and it is readily seen that $\nu^j \toY \nu$ (cf. Proposition 4.24 in~\cite{Mull99VMMP}).

Assume that the original gradient $p$-Young measure $\nu$ is generated by a $p$-equiintegrable sequence $(\nabla v_k) \subset (\Lrm^p \cap \Crm^\infty)(\Omega;\R^{d \times d})$; note that it is always possible to find a $p$-equiintegrable generating sequence by a suitable cut-off argument, see for example Lemma~8.15 in~\cite{Pedr97PMVP}. We aim to find an explicit generating sequence for $\nu^j$ in terms of $\nabla v_k$ so that we can get good estimates for equiintegrability. To this end, we wish to fix the boundary values of $v_k$ on $\partial U^j_2$ to be $0$. We follow a standard cut-off argument but we write it explicitly with a view towards the estimates. For each $j\in\mathbb{N}$ fixed, define a sequence of cut-off functions $\{\eta^j_n\}$ with the following properties:
\begin{itemize}
\item [(i)] $\eta^j_n\equiv 1$ on $\partial U^j_2$;
\item [(ii)] $\eta^j_n\equiv 0$ in $\{x\in U^j_2 : {\rm dist}(x,\partial U^j_2)\geq 1/n\}$;
\item [(iii)] $\vert\nabla\eta^j_n\vert\leq Cn$, for some constant $C > 1$.
\end{itemize}
Consider the functions
\[
w^j_{k,n}(x)=(1-\eta^j_n(x))v_k(x).
\]
Then the $w^j_{k,n}$ satisfy the zero boundary condition on $\partial U^j_2$ for all $k, n$ and also
\[
\nabla w^j_{k,n}=(1-\eta^j_n(x))\nabla v_k -v_k\otimes\nabla\eta^j_n.
\]
Since $v_k\rightarrow 0$ strongly in $\Lrm^p$, we can choose $k=k(n)$, with $k(n)\to\infty$ as $n\to\infty$, such that
\[
\lim_{n\to\infty}\norm{v_{k(n)}\otimes\nabla\eta^j_n}_p=0
\]
uniformly in $j$ (since $\eta^j_n$ is bounded independently of $j$). Moreover, it is easy to see that, for every $j$,
\[
(1-\eta^j_n)\nabla v_{k(n)} \toY \nu^j
\]
as $n\to\infty$. Putting both these properties together we conclude 
\[
\nabla w^j_{k(n),n}(x) \toY \nu^j
\]
for every $j$. But then, in view of $\nu^j\toY \nu$, we can use a standard diagonal argument to choose $n=n(j)$, $n(j)\to\infty$ as $j\to\infty$, so large that
\[
\nabla u_j:=\nabla w^j_{k(n(j)),n(j)} \toY \nu
\]
as $j\to\infty$. By construction, each member of this sequence is compactly supported in $U^j_2$ and therefore in particular in $\Omega$. Also, the $\nabla u_j$ are zero in $U^j_1\supset K^j_1$, which by~\eqref{eq:compactapprox} implies~\eqref{eq:almostequal}. It remains to check the $p$-equiintegrability.

To this end, recall that
\[
\nabla u_j(x)=(1-\eta^j_{n(j)}(x))\nabla v_{k(n(j))} -v_{k(n(j))}\otimes\nabla\eta^j_{n(j)}.
\]
The first term is $p$-equiintegrable because $(\nabla v_k)$ is, and because $|1-\eta^j_{n(j)}(x)|$ is pointwise dominated by 1. The second term converges to zero in $\Lrm^p$ by choice of $k(n)$ (and by choosing $n=n(j)$ even larger if necessary) and is in particular $p$-equiintegrable. This shows that $(\nabla u_j)$ is $p$-equiintegrable, which completes the proof.
\end{proof}

We finish this section with definitions relating to a fundamental subclass of gradient Young measures, the laminates. In this context, see Chapter~9 of~\cite{Pedr97PMVP}.

\begin{definition}
We define:
\begin{enumerate}
\item A collection $\{(t_k,M_k)\}_{k=1,\ldots,m} \subset (0,1) \times \R^{m \times d}$ with $\sum t_k = 1$ is said to satisfy the \term{$(H_m)$-condition}
\begin{itemize}
\item[(i)] for $m = 2$, if $\rank (M_1-M_2) \leq 1$,
\item[(ii)] for $m > 2$, if after a permutation of indices, $\rank (M_1-M_2) \leq 1$ and with
\begin{align*}
  \qquad\qquad s_1 &:= t_1 + t_2,  & \tilde{M}_1 &:= \frac{t_1}{s_1} M_1 + \frac{t_2}{s_1} M_2, \\
  \qquad\qquad s_k &:= t_{k+1},  & \tilde{M}_k &:= M_{k+1} \qquad\text{for}\quad k = 2,3,\ldots,
\end{align*}
the collection $(s_k,\tilde{M}_k)_{k=1,\ldots,m-1}$ satisfies the $(H_{m-1})$-condition.
\end{itemize}

\item A probability measure $\nu \in \Mbf^1(\R^{m \times d})$ is called a \term{finite-order laminate} if $\nu = \sum_{k=1}^m t_k \delta_{M_k}$ and the collection $\{(t_k,M_k)\}_{k=1,\ldots,m} \subset (0,1) \times \R^{m \times d}$ satisfies the $(H_m)$-condition.

\item A probability measure $\nu \in \Mbf^1(\R^{m \times d})$ is called a \term{$p$-laminate} ($1 \leq p < \infty$) if there exists a sequence $(\nu_j) \subset \Mbf^1(\R^{m \times d})$ of finite-order laminates such that $\int \abs{\frarg}^p \dd \nu_j \leq C$ for some constant $C>0$ and all $j \in \N$ and $\nu_j \toweakstar \nu$.

\item A probability measure $\nu$ is called an \term{$\infty$-laminate} if there exists a sequence $(\nu_j) \subset \Mbf^1(\R^{m \times d})$ of finite-order laminates such that $\supp \nu_j \subset K$ for some $K\subset \R^{m \times d}$ compact and all $j \in \N$ and $\nu_j \toweakstar \nu$.
\end{enumerate}
\end{definition}

\begin{remark}
Any finite-order laminate is an $\infty$-laminate and every $\infty$-laminate is a $p$-laminate for every $p\in[1,\infty)$. Also, since finite-order laminates are gradient $\infty$-Young measures (see Chapter~9~\cite{Pedr97PMVP}), by a standard diagonal argument, $\infty$-laminates are gradient $\infty$-Young measures and $p$-laminates are gradient $p$-Young measures.
\end{remark}

\section{Geometry of the determinant constraint} \label{sc:geometry}

In this section we investigate the \enquote{geometry} of the set $\setn{ M \in \R^{d \times d} }{ \det\, M = 0 }$, which has a central place in our argument. First, we make the simple observation that any square matrix $M_0 \in \R^{d \times d}$ with $\det\, M_0 < 0$ can be written as the barycenter of a probability measure $\mu \in \Mbf^1(\R^{d \times d})$ with
\[
  \supp \mu \subset \setb{ M \in \R^{d \times d} }{ \det\, M = 0 }.
\]
Indeed, if (and we will see in the proof of Proposition~\ref{prop:geometry} below that we can always reduce to this case)
\[
  M_0 = \begin{pmatrix} -\sigma_1 & & & \\ & \sigma_2 & & \\ & & \ddots & \\ & & & \sigma_d \end{pmatrix}
  \qquad\text{with}\qquad
  0\leq\sigma_1 \leq \sigma_2 \leq \cdots \leq \sigma_d,
\]
then trivially,
\begin{align*}
  M_0 &=\phantom{:} \frac{1}{2} \begin{pmatrix} 0 & & & & \\ & 2\sigma_2 & & & \\ & & \sigma_3 & & \\ & & & \ddots & \\ & & & & \sigma_d \end{pmatrix}
    + \frac{1}{2} \begin{pmatrix} -2\sigma_1 & & & & \\ & 0 & & & \\ & & \sigma_3 & & \\ & & & \ddots & \\ & & & & \sigma_d \end{pmatrix}  \\
  &=: \frac{1}{2} M_1 + \frac{1}{2} M_2.
\end{align*}
It is clear that $\det\, M_1 = \det\, M_2 = 0$, and so,
\[
  \mu := \frac{1}{2} \delta_{M_1} + \frac{1}{2} \delta_{M_2}
\]
fulfills the above assertion.

A more intricate question is whether this can also be achieved if $\mu$ is restricted to be a gradient Young measure or even a $p$-laminate. This question as well turns out to have a positive answer: It is indeed always possible to write $M_0$ as the barycenter of a $p$-laminate, albeit one with infinite order, and certain good estimates hold. This can be seen as an assertion about the \enquote{geometry} of the set $\setn{ M \in \R^{d \times d} }{ \det\, M = 0 }$, see Section~\ref{sc:remarks} for further discussion of this point.

\begin{proposition} \label{prop:geometry}
Let $M_0 \in \R^{d \times d}$ with $\det\, M_0 < 0$. Then, there exists a homogeneous Young measure $\nu \in \Mbf^1(\R^{d \times d})$ that is a $p$-laminate of infinite order for every $p \in [1,d)$ and such that the following assertions hold:
\begin{itemize}
  \item[(i)] $[\nu] = \displaystyle \int \id \dd \nu = M_0$,
  \item[(ii)] $\supp \nu \subset \setb{ M \in \R^{d \times d} }{ \det\, M = 0 }$,
  \item[(iii)] $\displaystyle \int \abs{\frarg}^p \dd \nu \leq C_p \abs{M_0}^p$,
  \item[(iv)] $\displaystyle \int \abs{A-M_0}^p \dd \nu(A) \leq C_p \abs{\det\, M_0}^{p/d}$,
 \end{itemize}
where $C_p = C(d,p)$.
\end{proposition}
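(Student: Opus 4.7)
My plan is to construct the desired laminate explicitly via an iterative rank-one splitting procedure, after reducing to the diagonal case by singular value decomposition.

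\textit{SVD reduction.} Writing $M_0 = U \Sigma V^T$ with $\Sigma = \mathrm{diag}(-\sigma_1, \sigma_2, \ldots, \sigma_d)$ and $0 \leq \sigma_1 \leq \cdots \leq \sigma_d$ (absorbing one negative sign into $\Sigma$ so that $\det \Sigma = \det M_0 < 0$), we may assume $M_0 = \Sigma$, since the map $M \mapsto U M V^T$ is a Frobenius-norm isometry on $\R^{d \times d}$ preserving rank-one connections and $\abs{\det}$. Any laminate for $\Sigma$ satisfying (i)--(iv) then transports verbatim to $M_0$ by pushforward.

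\textit{The construction.} Set $B_0 := M_0$, and for $k \geq 1$ let $j_k \in \{1, \ldots, d\}$ cycle through $1, 2, \ldots, d, 1, 2, \ldots$. Perform a symmetric rank-one split $B_{k-1} = \tfrac{1}{2} A_k + \tfrac{1}{2} B_k$ along $e_{j_k} \otimes e_{j_k}$, with signs chosen so that $A_k$ has $(A_k)_{j_k j_k} = 0$ (hence $\det A_k = 0$) while $B_k$ has this entry doubled in magnitude. A direct computation yields $B_{cd} = 2^c M_0$ after $c$ full cycles. The finite-order laminates $\nu_N := \sum_{k=1}^N 2^{-k} \delta_{A_k} + 2^{-N} \delta_{B_N}$ obtained by unrolling the splits satisfy the $(H_{N+1})$-condition, are barycentered at $M_0$, and, apart from the escape atom at $B_N$, are supported on $\{\det M = 0\}$. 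The escape atom has norm $\abs{B_N} \sim 2^{N/d} \abs{M_0}$ and weight $2^{-N}$, so its contribution to $\int \abs{M}^p \dd \nu_N$ is of order $2^{N(p/d-1)} \abs{M_0}^p$, which vanishes as $N \to \infty$ if and only if $p < d$; this is the origin of the dimensional restriction. For $p < d$ the sequence $\nu_N$ is uniformly $L^p$-bounded and converges weakly-$*$ to the probability measure $\nu := \sum_{k \geq 1} 2^{-k} \delta_{A_k}$, which is thus an infinite-order $p$-laminate, is barycentered at $M_0$, and is supported on $\{\det = 0\}$, establishing (i) and (ii). Estimate (iii) follows from the convergent geometric series $\int \abs{M}^p \dd \nu \lesssim \sum_c 2^{c(p-d)} \abs{M_0}^p = C_p \abs{M_0}^p$.

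\textit{The sharp estimate (iv), which is the main obstacle.} The symmetric cycled scheme above only yields $\int \abs{M - M_0}^p \dd \nu \leq C \sum_i \sigma_i^p$, strictly worse than the target bound $C \abs{\det M_0}^{p/d} = C (\sigma_1 \cdots \sigma_d)^{p/d}$ when the $\sigma_i$ are unbalanced (e.g.\ $\sigma_1 \ll \sigma_d$). To rescue (iv), I would instead choose the splitting direction $j_k$ \emph{adaptively}, always targeting the diagonal entry of $B_{k-1}$ of smallest absolute value (equivalently, the direction of nearest approach to $\{\det = 0\}$ at each step). This produces an initial phase of roughly $N_1 \sim \log_2(\sigma_2/\sigma_1)$ splits along $e_1 \otimes e_1$, all with good matrix $\mathrm{diag}(0, \sigma_2, \ldots, \sigma_d)$ at distance $\sigma_1$ from $M_0$ and total contribution $O(\sigma_1^p) \leq O(\abs{\det M_0}^{p/d})$ since $\sigma_1 \leq (\sigma_1 \cdots \sigma_d)^{1/d}$. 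The first switch to direction $2$ occurs at a step of weight $2^{-N_1} \lesssim \sigma_1/\sigma_2$, which suppresses the displaced good matrices in the next phase by a factor $\sigma_1/\sigma_2$. Iterating this adaptive strategy through all $d$ directions, the successive switch-point inequalities $2^{-N_i} \lesssim \sigma_i/\sigma_{i+1}$ combine via a Muirhead-type majorization (using the ordering $\sigma_1 \leq \cdots \leq \sigma_d$ to push negative exponents onto the largest singular values) into the desired geometric-mean bound $C(\sigma_1 \cdots \sigma_d)^{p/d}$. The delicate core of the proof will be carrying out this induction uniformly across all singular-value regimes while verifying that the adaptive splits still assemble into a valid $(H_m)$-laminate and that the total construction is scale-invariant with the homogeneity of (iv).
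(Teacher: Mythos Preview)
Your diagonal splitting scheme is correct and yields (i)--(iii) essentially as you describe; the escape-mass calculation $2^{-N}|B_N|^p \sim 2^{N(p/d-1)}|M_0|^p$ pinpoints exactly why $p<d$ is needed.

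For (iv), however, you have made the argument much harder than it needs to be. Your adaptive rule does work, but the phase-by-phase bookkeeping and the appeal to a ``Muirhead-type majorization'' are unnecessary and, as written, not a proof. The one observation you are missing is this: since at step $k$ you split the diagonal entry of $B_{k-1}$ of minimal absolute value, and the product of those entries has absolute value $2^{k-1}|\det M_0|$, the minimal entry is bounded by the geometric mean:
\[
|A_k - B_{k-1}| \;=\; |B_k - B_{k-1}| \;=\; \min_i \bigl|(B_{k-1})_{ii}\bigr| \;\le\; \bigl(2^{k-1}|\det M_0|\bigr)^{1/d}.
\]
Telescoping then gives $|A_k - M_0| \le \sum_{l=1}^{k} 2^{(l-1)/d}|\det M_0|^{1/d} \le C\,2^{k/d}|\det M_0|^{1/d}$, and hence
\[
\int |A-M_0|^p \,\mathrm{d}\nu \;=\; \sum_{k\ge 1} 2^{-k}|A_k - M_0|^p \;\le\; C \sum_{k\ge 1} 2^{k(p/d-1)}\,|\det M_0|^{p/d} \;=\; C_p\,|\det M_0|^{p/d}.
\]
No induction over singular-value regimes is needed.

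It is worth comparing this with the paper's route, which is genuinely different. Rather than splitting along diagonal directions $e_j\otimes e_j$, the paper perturbs $M_0$ along the \emph{off-diagonal} rank-one directions $e_1\otimes e_2$ and $e_2\otimes e_1$, each with amplitude $\gamma=\sqrt{\sigma_1\sigma_2}$. Of the four resulting matrices, two have vanishing upper-left $2\times 2$ minor (hence zero determinant) and two have determinant $2\det M_0$; all four sit at Frobenius distance $\sqrt{2\sigma_1\sigma_2}\le\sqrt{2}\,|\det M_0|^{1/d}$ from $M_0$. This builds the geometric mean directly into a single splitting step, with no adaptivity, at the cost of having to re-diagonalize via the singular value decomposition before each iteration (since the bad matrices are no longer diagonal). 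Your scheme, once completed with the AM-GM step above, has the virtue of staying within diagonal matrices throughout and avoiding the repeated SVD; the paper's has the virtue that the key distance estimate is immediate at every node of the lamination tree.
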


\begin{remark}
\begin{enumerate}
\item Note that $\nu$ does not depend on $p$.
\item We remark that in~(iii),~(iv) and below $C_p=C(d,p)$ denotes a generic constant, which may change from line to line, such that $C_p \to \infty$ as $p \to d$; for the necessity of $p < d$, see Section~\ref{sc:remarks}.
\item One can also show the additional estimate $\displaystyle \int \abs{\det\, A}^{p/d} \dd \nu(A) \leq C_p \abs{\det\, M_0}^{p/d}$.
\end{enumerate}
\end{remark}

\begin{proof}
The idea of the proof is to employ recursive lamination constructions to furnish a sequence of homogeneous Young measures $\nu_0 = \delta_{M_0}, \nu_1, \nu_2, \ldots$, which push more and more of the total mass into the set of zero-determinant matrices, and then use weak*-precompactness of the sequence $(\nu_j)$ to pass to an infinite-order $p$-laminate $\nu$, which satisfies all the properties in the proposition.

\proofstep{Step~1.}
We first transform $M_0$ to diagonal form. Let $M_0 = \tilde{P} \tilde{D}_0 \tilde{Q}^T$ be the \emph{real} singular value decomposition, that is, $\tilde{D}_0 = \mathrm{diag}(\sigma_1,\ldots,\sigma_d)$ with $0<\sigma_1 \leq \sigma_2 \leq \cdots \leq \sigma_d$, and $\tilde{P},\tilde{Q}$ orthogonal matrices. As $0 > \det\, M_0 = \det\, \tilde{P} \cdot \det\, \tilde{D}_0 \cdot \det\, \tilde{Q}$, either $\tilde{P}$ or $\tilde{Q}$ has negative determinant, say $\det\, \tilde{P} < 0$ (the other case is similar). With
\[
  D_0 := \begin{pmatrix} -\sigma_1 & & & \\ & \sigma_2 & & \\ & & \ddots & \\ & & & \sigma_d \end{pmatrix}  \qquad
  P := \tilde{P} \cdot \begin{pmatrix} -1 & & & \\ & 1 & & \\ & & \ddots & \\ & & & 1 \end{pmatrix},  \qquad
  Q := \tilde{Q},
\]
we have $M_0 = P D_0 Q^T$, where now $P,Q \in \mathrm{SO}(d)$ and $\det\, D_0 < 0$. Now, if $D_0$ can be written as a laminate, i.e.\ a hierarchical decomposition along rank-one lines, then the same holds true for $M_0$ since $P (a \otimes b) Q^T = (Pa) \otimes (Qb)$ for any $a,b \in \R^d$. 

We remark in this context that the procedure to reduce to a diagonal matrix does not change the (Frobenius) matrix norm, since the latter only depends on the singular values, which trivially are not changed by the singular value decomposition. Also, as $P,Q \in \mathrm{SO}(d)$, the determinant is also not changed in this process.

\proofstep{Step~2.}
Owing to Step~1, in the following we can assume that $M_0$ is already diagonal, the first diagonal entry is negative and all others are positive. We will write the first $2 \times 2$ block of $M_0$ as an infinite hierarchy of convex combinations along rank-one lines such that all resulting matrices have zero determinant. Write
\[
 M_0 = \begin{pmatrix} -\sigma_1 & & & \\ & \sigma_2 & & \\ & & \ddots & \\ & & & \sigma_d \end{pmatrix},
\]
for which $\sigma_i > 0$ as in Step~1.

Set $r:=2^{\frac{p}{d}-1}$ and observe that since $p < d$, we have $2^{(1-d)/d}\leq r<1$. We also set
$\gamma := \sqrt{\sigma_1\sigma_2}$.
Then, we can decompose $M_0$ twice along rank-one lines as follows:
\begin{align*}
  M_0 &= \frac{1}{2} \bigl[ M_0 + \gamma (\ee_1 \otimes \ee_2) \bigr] + \frac{1}{2} \bigl[ M_0 - \gamma (\ee_1 \otimes \ee_2) \bigr] \\
  &= \frac{1}{4} \bigl[ M_0 + \gamma (\ee_1 \otimes \ee_2) + \gamma (\ee_2 \otimes \ee_1) \bigr]
    + \frac{1}{4} \bigl[ M_0 + \gamma (\ee_1 \otimes \ee_2) - \gamma (\ee_2 \otimes \ee_1) \bigr] \\
  &\qquad + \frac{1}{4} \bigl[ M_0 - \gamma (\ee_1 \otimes \ee_2) + \gamma (\ee_2 \otimes \ee_1) \bigr]
    + \frac{1}{4} \bigl[ M_0 - \gamma (\ee_1 \otimes \ee_2) - \gamma (\ee_2 \otimes \ee_1) \bigr] \\
  &=: \frac{1}{4} M_{1,B1} + \frac{1}{4} M_{1,G1} + \frac{1}{4} M_{1,G2} + \frac{1}{4} M_{1,B2}.
\end{align*}
We can compute
\begin{align}
  \det\, M_{1,G1} &= \det\, M_{1,G2} = (-\sigma_1\sigma_2 + \sigma_1\sigma_2)\prod_{i=3}^d\sigma_i  =  0.  \notag \\ 
  \det\, M_{1,B1} &= \det\, M_{1,B2} = (-\sigma_1\sigma_2 - \sigma_1\sigma_2)\prod_{i=3}^d\sigma_i  = -2\sigma_1\sigma_2\prod_{i=3}^d\sigma_i < 0 \notag\\
  \abslr{\det\, M_{1,B1}} &= \abslr{\det\, M_{1,B2}} =2\abs{\det\, M_0}= (2r)^{d/p} \, \abs{\det\, M_0}.  \label{eq:det_bad_est}
\end{align}
Thus, the \enquote{good} matrices $M_{1,G1}, M_{1,G2}$ already satisfy our constraint of having zero determinant, the \enquote{bad} matrices $M_{1,B1}, M_{1,B2}$ will be further decomposed later on.
Moreover, note that
\begin{equation}  \label{eq:M_1X_dist}
  \abs{M_{1,J}-M_0} =  2^{1/2} (\sigma_1\sigma_2)^{1/2}
    \leq 2^{1/2} \, \abs{\det\, M_0}^{1/d},
\end{equation}
since $0<\sigma_1 \leq \sigma_2 \leq \cdots \leq \sigma_d$ and hence $(\sigma_1\sigma_2)^{d/2}\leq \abs{\det\, M_0}$.

\proofstep{Step~3.}
Define
\[
  \nu_0 := \delta_{M_0},  \qquad
  \nu_1 := \frac{1}{4} \delta_{M_{1,G1}} + \frac{1}{4} \delta_{M_{1,G2}} + \frac{1}{4} \delta_{M_{1,B1}} + \frac{1}{4} \delta_{M_{1,B2}},
\]
and, as detailed above, we observe that $\nu_1$ is derived from $\nu_0$ by two additional lamination steps. Moreover, $[\nu_1] = [\nu_0] = M_0$.

Now recursively apply the procedure from the preceding steps to decompose the \enquote{bad} matrices $M_{1,B1}$ and $M_{1,B2}$ in turn taking the role of $M_0$. This yields matrices $M_{2,G1},\ldots,M_{2,G4}$, $M_{2,B1},\ldots,M_{2,B4}$ such that
\begin{align*}
  M_{1,B1} = \frac{1}{4} M_{2,G1} + \frac{1}{4} M_{2,G2} + \frac{1}{4} M_{2,B1} + \frac{1}{4} M_{2,B2}, \\
  M_{1,B2} = \frac{1}{4} M_{2,G3} + \frac{1}{4} M_{2,G4} + \frac{1}{4} M_{2,B3} + \frac{1}{4} M_{2,B4}.
\end{align*}
We define $\nu_2$ accordingly as
\begin{align*}
  \nu_2 &:= \frac{1}{4} \delta_{M_{1,G1}} + \frac{1}{4} \delta_{M_{1,G2}} + \frac{1}{4^2} \biggl[ \delta_{M_{2,G1}} + \delta_{M_{2,G2}} + \delta_{M_{2,B1}} + \delta_{M_{2,B2}} \biggr] \\
  &\qquad + \frac{1}{4^2} \biggl[ \delta_{M_{2,G3}} + \delta_{M_{2,G4}} + \delta_{M_{2,B3}} + \delta_{M_{2,B4}} \biggr].
\end{align*}
Then, still $[\nu_2] = M_0$ and $\nu_2$ is a finite-order laminate.

Now iterate this scheme of first bringing the matrix to diagonal form via Step~1 and then laminating via Step~2, in every step defining a new finite-order laminate $\nu_j$, $j \in \N$, with $[\nu_j] = M_0$. In this context recall that the reduction to a diagonal form does not change the matrix norm or determinant.

In more detail, we get in the first two iterations (adding appropriate indices to the matrices $P,Q,D$):
\begin{align*}
  M_0 &= P_0 D_0 Q_0^T \\
  &= P_0 \biggl( \frac{1}{4} M_{1,G1} + \frac{1}{4} M_{1,G2} + \frac{1}{4} M_{1,B1} + \frac{1}{4} M_{1,B2} \biggr) Q_0^T \\
  &= P_0 \biggl( \frac{1}{4} M_{1,G1} + \frac{1}{4} M_{1,G2} + \frac{1}{4} P_{1,B1} D_{1,B1} Q_{1,B1}^T + \frac{1}{4} P_{1,B2} D_{1,B2} Q_{1,B2}^T \biggr) Q_0^T \\
  &= \frac{1}{4} \underbrace{P_0 M_{1,G1} Q_0^T}_{\det\, = 0} + \frac{1}{4} \underbrace{P_0 M_{1,G2} Q_0^T}_{\det\, = 0} + \frac{1}{4} P_0 P_{1,B1} D_{1,B1} Q_{1,B1}^T Q_0^T + \frac{1}{4} P_0 P_{1,B2} D_{1,B2} Q_{1,B2}^T Q_0^T \\
  &= \frac{1}{4} P_0 M_{1,G1} Q_0^T + \frac{1}{4} P_0 M_{1,G2} Q_0^T \\
  &\qquad + \frac{1}{4} P_0 P_{1,B1} \biggl( \frac{1}{4} M_{2,G1} + \frac{1}{4} M_{2,G2} + \frac{1}{4} M_{2,B1} + \frac{1}{4} M_{2,B2} \biggr) Q_{1,B1}^T Q_0^T + \underbrace{\cdots}_{\text{$1,B2$-part}}
\end{align*}
In every step of bringing matrices to diagonal form, the mean value $M_0$ of the Young measures $\nu_j$ associated to these splittings is preserved. Further, note that we only split along rank-one lines, hence
\[
  P_0 M_{1,G1/G2/B1/B2} Q_0^T = M_0 \pm \gamma (P_0\ee_1) \otimes (Q_0\ee_2) \pm \gamma (P_0\ee_2) \otimes (Q_0\ee_1),
\]
and we preserve the property for the $\nu_j$'s to be finite-order laminates.

\proofstep{Step~4.}
Let us consider the distance integral in~(iv):
\begin{align*}
  \int \abs{A-M_0}^p \dd \nu_j(A) &= \sum_{i=1}^j \sum_{k=1}^{2^i} \frac{1}{4^i} \abs{M_{i,Gk}-M_0}^p + \sum_{k=1}^{2^j} \frac{1}{4^j} \abs{M_{j,Bk}-M_0}^p \\
  &\leq \sum_{i=1}^j \sum_{k=1}^{2^i} \frac{1}{4^i} \biggl( \sum_{\ell=1}^i \abs{X_\ell -X_{\ell-1}} \biggr)^p + \sum_{k=1}^{2^j} \frac{1}{4^j} \biggl( \sum_{\ell=1}^j \abs{Y_\ell-Y_{\ell-1}} \biggr)^p,
\end{align*}
where in the innermost summations we defined $X_i := M_{i,Gk}$, $X_0 := M_0$, and $X_{\ell-1}$ is the $M_{\ell-1,Bk}$ with $k \in \{1,\ldots,2^{\ell-1}\}$ such that $X_\ell$ originated from $X_{\ell-1}$ through the lamination construction from the previous proof step (with the understanding $M_{0,B1} := M_0$); similarly, $Y_j := M_{j,Bk}$, $Y_0 := M_0$, and $Y_{\ell-1}$ defined analogously to $X_{\ell-1}$. Then, $\sum_{\ell=1}^i X_\ell - X_{\ell-1} = M_{i,Gk}-M_0$ and $\sum_{\ell=1}^j Y_\ell - Y_{\ell-1} = M_{j,Bk}-M_0$, and so the second line in the estimate follows from the first by virtue of the triangle inequality. Now, to bound $\abs{X_\ell - X_{\ell-1}}$ we use~\eqref{eq:M_1X_dist} and then~\eqref{eq:det_bad_est} recursively. Thus,
\begin{align*}
  \sum_{\ell=1}^i \abs{X_\ell -X_{\ell-1}} &\leq \sum_{\ell=1}^i 2^{1/2} \, \abs{\det\, X_{\ell-1}}^{1/d} \leq \sum_{\ell=1}^i 2^{1/2} \cdot (2r)^{(\ell-1)/p} \, \abs{\det\, M_0}^{1/d} \\
  &\leq \frac{2^{1/2} \, \abs{\det\, M_0}^{1/d}}{(2r)^{1/p}-1} \cdot (2r)^{i/p}
\end{align*}
and a similar estimate holds for the second inner summation involving the $Y_\ell$'s. Hence, we can plug this into the previous estimate to get 
\begin{align}
 \int \abs{A-M_0}^p \dd \nu_j(A) &\leq \biggl[\frac{2^{1/2}}{(2r)^{1/p}-1} \biggr]^p \cdot  \abs{\det\, M_0}^{p/d} \cdot \left[ \sum_{i=1}^j \frac{2^i (2r)^i}{4^i} + \frac{2^j (2r)^j}{4^j} \right]\nonumber \\
  &\leq \biggl[\frac{2^{1/2}}{(2r)^{1/p}-1} \biggr]^p \cdot  \abs{\det\, M_0}^{p/d} \cdot \biggl[ \frac{1}{1-r} + r^j \biggr] \nonumber\\
  &\leq C_p \abs{\det\, M_0}^{p/d}.
  \label{eq:geom_prop1}
\end{align}
Moreover, by \eqref{eq:geom_prop1} and the fact that the $\nu_j$'s are probability measures,
\begin{align}
 \int \abs{A}^p \dd \nu_j(A) & \leq 2^p \left[\int \abs{A-M_0}^p \dd \nu_j(A) + \abs{M_0}^p \right]\nonumber\\
  & \leq 2^p C_p \abs{\det\, M_0}^{p/d} + 2^p \abs{M_0}^p\nonumber\\
  & \leq C_p \abs{M_0}^p,
  \label{eq:geom_prop2}
\end{align}
which is uniformly bounded. In particular, the $\nu_j$ are (sequentially) weakly*-precompact as measures, hence there exists a subsequence and a cluster point $\nu \in \Mbf(\R^{d \times d})$, which is a $p$-laminate, $p \in [1,d)$, and satisfies $[\nu] = M_0$. 
Passing to the limit in \eqref{eq:geom_prop1} and \eqref{eq:geom_prop2} yields~(iii) and~(iv).

Finally, it can be seen easily that the mass of $\nu_j$ that is carried by \enquote{bad} matrices, i.e.\ those with negative determinant, is
\[
  \abs{\nu_j} \bigl( \setb{ M \in \R^{d \times d} }{ \det\, M < 0 } \bigr) = \frac{2^j}{4^j} \to 0  \qquad\text{as $j\to\infty$.} 
\]
Thus, also~(ii) follows, concluding the proof.
\end{proof}

\begin{remark}
By a similar, slightly more intricate, strategy one can also show that there exist (finite-order) laminates $\nu_j$, with $\int \abs{\frarg}^p\dd \nu_j$ uniformly bounded, and $\nu_j$ can be split as
\[
  \nu_j = \nu_j^+ + \nu_j^- \qquad\text{with}\qquad
  \supp \nu_j^\pm \subset \setb{ M \in \R^{d \times d} }{ \det\, M \gtrless 0 },
\]
where $\int \abs{\frarg}^p \dd \nu_j^- \to 0$ as $j\to\infty$. In particular, $\nu_j\toweakstar \nu$ (in the weak* Young measure or measure convergence) where $\nu$ is as in Proposition~\ref{prop:geometry} but $\supp \nu \subset \setb{ M \in \R^{d \times d} }{ \det\, M > 0 }$.
\end{remark}

\section{Weakly orientation-preserving generating sequences} \label{sc:conv_weakly_op}

Employing our investigation into the geometry of the zero-determinant constraint in matrix space from the previous section and the fact that $p$-laminates are gradient Young measures (which follows for example from the Kinderlehrer--Pedregal Theorem), in this section we prove the following proposition, which directly entails a weaker variant of Theorem~\ref{thm:main} with the generating sequence consisting of gradients with nonnegative determinant only; the full strength of the main theorem is proved in the following section.

\begin{proposition}\label{prop:nonnegapprox}
Let $u\in \Wrm^{1,p}(\Omega;\R^d)$, $p\in (1,d)$ and $\epsilon>0$. Then there exists $v\in \Wrm^{1,p}(\Omega;\R^d)$ such that
\begin{itemize}
\item[(i)]$\det\, \nabla v(x)\geq0$ for a.e.\ $x \in \Omega$,\vspace{0.2cm}
\item[(ii)] $v-u\in\Wrm^{1,p}_0(\Omega,\R^d)$,\vspace{0.2cm}
\item[(iii)]$\displaystyle \norm{\nabla u-\nabla v}^p_p\leq C_p\int_{\{\det\, \nabla u<0\}}|\det\, \nabla u(x)|^{p/d}\dd x,$\vspace{0.2cm}
\item[(iv)]$\absb{\setb{x\in\Omega}{ \text{$\det\, \nabla u\geq0$ and $\nabla v(x)\neq\nabla u(x)$}}}<\epsilon,$
\end{itemize}
where $C_p=C(d,p)$.
\end{proposition}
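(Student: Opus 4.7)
The plan is to combine the lamination construction of Proposition~\ref{prop:geometry} with the refined Kinderlehrer--Pedregal approximation of Lemma~\ref{lem:KinPedrefined} in an iterative scheme that drives the \enquote{determinant deficiency}
\[
  b_n := \int_\Omega (\det\, \nabla w_n(x))_-^{p/d} \dd x
\]
of successive approximants $w_0 := u, w_1, w_2, \ldots$ to zero geometrically, while keeping the corresponding $\Lrm^p$-corrections summable and localised inside the bad set and a small \enquote{pollution} set.

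As the basic step, given $w \in \Wrm^{1,p}(\Omega;\R^d)$ with $B := \setn{x}{\det\,\nabla w(x)<0}$, I form the parametrised Young measure
\[
  \mu^w_x := \begin{cases} \nu_{\nabla w(x)} & \text{if $x \in B$,} \\ \delta_{\nabla w(x)} & \text{if $x \notin B$,} \end{cases}
\]
where $\nu_{M_0}$ is the infinite-order $p$-laminate furnished by Proposition~\ref{prop:geometry} for the matrix $M_0$. Using a Borel-measurable selection of the singular value decomposition inside that construction, the family $(\mu^w_x)$ is weakly* measurable; each fibre is a $p$-laminate (hence a gradient $p$-Young measure) with barycenter $\nabla w(x)$, while Proposition~\ref{prop:geometry}(iii) combined with $\nabla w \in \Lrm^p$ yields the required $p$-moment bound. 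Thus $\mu^w$ is a gradient $p$-Young measure with barycenter $\nabla w$ and $\supp \mu^w_x \subset \setn{M}{\det\, M \geq 0}$ for a.e.\ $x$. Lemma~\ref{lem:KinPedrefined} then produces a $p$-equiintegrable generating sequence $(\nabla w_j)$ with $w_j - w \in \Wrm^{1,p}_0$ and $\absb{\setn{x \in \Omega \setminus B}{\nabla w_j(x) \neq \nabla w(x)}} \to 0$. Testing Young-measure convergence against the equiintegrable functions $M \mapsto (\det\, M)_-^{p/d}$ (dominated by $\abs{M}^p$) and $(x, M) \mapsto \abs{M - \nabla w(x)}^p$, and invoking the support condition on $\mu^w$ together with estimate (iv) of Proposition~\ref{prop:geometry}, gives
\[
  \int_\Omega (\det\, \nabla w_j)_-^{p/d} \dd x \to 0, \qquad \limsup_{j \to \infty} \norm{\nabla w_j - \nabla w}_p^p \leq C_p \int_B \abs{\det\, \nabla w}^{p/d}\dd x.
\]

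Setting $w_0 := u$ and $b_0 := \int_{\{\det\, \nabla u < 0\}} \abs{\det\, \nabla u}^{p/d} \dd x$ (taking $v := u$ if $b_0 = 0$), I iterate the basic step, choosing at level $n$ the index $j_n$ large enough that the resulting $w_{n+1}$ satisfies $b_{n+1} \leq 2^{-(n+1)} b_0$, $\norm{\nabla w_{n+1} - \nabla w_n}_p^p \leq C_p b_n + 2^{-(n+1)} b_0$, and $\abs{F_n} \leq \eps \, 2^{-(n+1)}$, where $F_n := \setn{x \in \Omega \setminus B_n}{\nabla w_{n+1}(x) \neq \nabla w_n(x)}$ and $B_n := \{\det\, \nabla w_n < 0\}$. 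Induction yields $b_n \leq 2^{-n} b_0$, so $\norm{\nabla w_{n+1} - \nabla w_n}_p \leq C_p' 2^{-n/p} b_0^{1/p}$ is summable and $(w_n)$ converges strongly in $\Wrm^{1,p}(\Omega;\R^d)$ to some $v$ with $v - u \in \Wrm^{1,p}_0(\Omega;\R^d)$. Telescoping yields (iii); an induction on $n$ shows $\nabla w_n \equiv \nabla u$ on $(\Omega \setminus B_0) \setminus (F_0 \cup \cdots \cup F_{n-1})$, so $\setn{x \in \Omega \setminus B_0}{\nabla v(x) \neq \nabla u(x)} \subset \bigcup_n F_n$ up to null sets, with total measure less than $\eps$, giving (iv). Finally, for (i), $b_n \to 0$ means $(\det\, \nabla w_n)_- \to 0$ in $\Lrm^{p/d}$ and hence in measure; along a subsequence one also has $\nabla w_n \to \nabla v$ a.e., so $(\det\, \nabla w_n)_- \to (\det\, \nabla v)_-$ a.e., forcing $\det\, \nabla v \geq 0$ a.e.

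The main technical obstacle I foresee is the passage from the pointwise family of homogeneous laminates $(\nu_{\nabla w(x)})_{x \in B}$ to a genuine gradient $p$-Young measure $\mu^w$ on $\Omega$: this demands a Borel-measurable selection inside the singular value decomposition underlying Proposition~\ref{prop:geometry} and a verification that the Jensen-type condition~(III) is inherited fibrewise. A secondary delicate point is the simultaneous calibration of the indices $j_n$ so that the geometric decay $b_n \leq 2^{-n} b_0$ is propagated and the summed $\Lrm^p$-corrections are controlled by a constant depending only on $p$, $d$ and $b_0$, thereby producing the sharp inequality (iii) and not merely a qualitative closeness statement.
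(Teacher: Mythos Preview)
Your proposal is correct and follows essentially the same iterative scheme as the paper: at each stage one replaces $\nabla w_n$ on the bad set $B_n=\{\det\,\nabla w_n<0\}$ by the $p$-laminate of Proposition~\ref{prop:geometry}, applies Lemma~\ref{lem:KinPedrefined} to extract an approximant with geometrically decaying determinant deficiency, and passes to the Cauchy limit in $\Lrm^p$.

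Two minor points of comparison are worth noting. For~(i) the paper appeals to Vitali's convergence theorem, whereas you use the equivalent route via a.e.\ convergence along a subsequence combined with convergence in measure of $(\det\,\nabla w_n)_-$; both are fine. For~(iv) your inductive argument---observing that if $x\in\Omega\setminus B_0$ and $x\notin F_0\cup\cdots\cup F_{n-1}$ then $\nabla w_n(x)=\nabla u(x)$, hence $x\notin B_n$, hence $x\notin F_n$ forces $\nabla w_{n+1}(x)=\nabla w_n(x)$---is in fact cleaner than the paper's treatment, which reaches the same conclusion through a somewhat more elaborate manipulation of the sets $A_l=\{\det\,\nabla v^l\geq0\}$ and $B_l=\{\nabla v^{l+1}\neq\nabla v^l\}$. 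The technical obstacle you flag (measurable dependence of $\nu_{\nabla w(x)}$ on $x$ through the singular value decomposition) is real but standard, and the paper does not address it explicitly either; the Jensen inequality~(III) is indeed inherited fibrewise since each $\nu_{M_0}$ is a homogeneous gradient $p$-Young measure.
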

Before we prove the proposition, let us demonstrate how the weaker version of Theorem~\ref{thm:main} follows from it.

\begin{proof}[Proof of the weaker version of \enquote{(ii) $\Rightarrow$ (i)} in Theorem~\ref{thm:main}]

For $p\in (1,d)$, let a gradient $p$-Young measure $\nu$ be given such that $\supp \nu_x \subset \setb{ M \in \R^{d \times d} }{ \det\, M \geq 0 }$ for a.e.\ $x \in \Omega$. By Lemma~\ref{lem:KinPedrefined}, there exists a generating sequence $(\nabla u_j)$ for $\nu$ which is $p$-equiintegrable and satisfies $u_j-u\in\Wrm^{1,p}_0(\Omega,\R^d)$ on where $\nabla u=[\nu]$. Moreover, $M \mapsto |\det\, M|^{p/d}$ has at most $p$-growth, and therefore, thanks to the assumption on the support of $\nu$ together with Young measure representation applied to the test function
\begin{equation}\label{eq:dettest}
f(A)=\begin{cases}|\det\, A|^{p/d}, & \text{if $\det\, A<0$}\\
0, & \text{otherwise,}
\end{cases}
\end{equation}
we may assume (after passing to a subsequence if necessary) that
\begin{equation}\label{eq:closetoK}
\int_{\{\det\, \nabla u_j<0\}}|\det\, \nabla u_j(x)|^{p/d}\dd x<\frac{1}{j^p}.
\end{equation}
Now apply Proposition~\ref{prop:nonnegapprox} to each $u_j$ (with arbitrary $\epsilon$) to obtain a new sequence $\{v_j\}$, such that the $v_j$ have nonnegative Jacobians a.e., $v_j-u\in\Wrm^{1,p}_0(\Omega,\R^d)$, and, by virtue of~\eqref{eq:closetoK} and part~(iii) in the proposition,
\[
\norm{\nabla u_j-\nabla v_j}_p<\frac{C_p^{1/p}}{j}.
\]  
It follows that $(v_j)$ is $p$-equiintegrable and generates $\nu$ by Lemma~\ref{lem:pclosegeneration}.
\end{proof}

\begin{proof}[Proof of Proposition~\ref{prop:nonnegapprox}]
In the course of this proof we construct a sequence of gradients $\{\nabla v^l\}_{l\in\N}$, bounded in $\Lrm^p(\Omega;\R^{d \times d})$, such that
\begin{equation}\label{eq:smalldet}
\int_{\{\det\, \nabla v^l<0\}}|\det\, \nabla v^l|^{p/d}\dd x\leq 2^{-lp}\int_{\{\det\, \nabla u<0\}}|\det\, \nabla u|^{p/d}\dd x
\end{equation}
and satisfying further properties mentioned in the following. In particular, the sequence is constructed such that all $v^l$ satisfy $v^l-u\in\Wrm^{1,p}_0(\Omega,\R^d)$.

To begin with, we set $\nabla v^0=\nabla u$. If $\nabla v^l\in \Lrm^p(\Omega;\R^{d \times d})$ has already been constructed, we find $\nabla v^{l+1}$ in the following way: by Proposition~\ref{prop:geometry}, for a.e.\ $x \in \Omega$ for which $\det\, \nabla v^l(x)<0$, we can find a $p$-laminate $\nu_x^l$ with support in the set of matrices with $\det\, M=0$ and such that
\[
\int|\frarg|^p\dd\nu_x^l\leq C_p|\nabla v^l(x)|^p
\]  
and $[\nu_x^l]=\nabla v^l(x)$. For $x \in \Omega$ with $\det\, \nabla v^l(x)\geq0$ we simply set $\nu_x^l=\delta_{\nabla v^l(x)}$. Thus we obtain a Young measure $\nu^l$ with $\int_{\Omega}\int|\frarg|^p\dd\nu_x^l\dd x<\infty$ and $[\nu^l]=\nabla v^l$ and the property that $\nu_x^l$ is a $p$-laminate for almost every $x \in \Omega$; in particular, $\nu$ is a gradient $p$-Young measure. Lemma~\ref{lem:KinPedrefined} then gives us a $p$-equiintegrable sequence of gradients $(\nabla v^{l,m})_{m\in\N}$ generating $\nu^l$ such that $v^{l,m}-u\in\Wrm^{1,p}_0(\Omega,\R^d)$. By Young measure representation, again using the test function $f$ from~\eqref{eq:dettest}, and the fact that $\nu^l$ is supported on matrices with nonnegative determinant, we may choose $m$ large enough, say $m=M$, and define $\nabla v^{l+1}:=\nabla v^{l,M}$ such that
\[
\int_{\{\det\, \nabla v^{l+1}<0\}}|\det\, \nabla v^{l+1}|^{p/d}\dd x\leq 2^{-(l+1)p}\int_{\{\det\, \nabla u<0\}}|\det\, \nabla u|^{p/d}\dd x.
\]
Moreover, by taking $M$ even larger if necessary, we can ensure that
\begin{equation}\label{eq:Mchoice}
\int_{\Omega}|\nabla v^{l+1}-\nabla v^l|^p\dd x\leq 2^p\int_{\Omega}\int|A-\nabla v^l|^p\dd\nu_x^l(A) \dd x
\end{equation}
(in fact, the left hand side can be made arbitrarily close to the right hand side of this inequality). Indeed, this follows again from Young measure representation, this time with the test function $|A-\nabla v^l|^p$. By virtue of Lemma~\ref{lem:KinPedrefined} we may also assume
\begin{equation}\label{eq:Alsmall}
\absb{\setb{x\in\Omega}{ \text{$\det\, \nabla v^l\geq0$ and $\nabla v^{l+1}(x)\neq\nabla v^l(x)$}}}<2^{-(l+2)}\epsilon.
\end{equation}
Thus we see that~\eqref{eq:smalldet} is satisfied for $l+1$ and $v^{l+1}-u\in\Wrm^{1,p}_0(\Omega,\R^d)$ holds. This completes the definition of our sequence.

Next, for any $l \in \N$, Proposition~\ref{prop:geometry}~(iv) and property~\eqref{eq:smalldet} yield the estimate 
\[
\begin{aligned}
\int_{\Omega}\int|A-\nabla v^l(x)|^p\dd\nu_x^l(A)\dd x&\leq C_p\int_{\{\det\, \nabla v^l<0\}}|\det\, \nabla v^l|^{p/d}\dd x\\
&\leq C_p2^{-lp}\int_{\{\det\, \nabla u<0\}}|\det\, \nabla u|^{p/d}\dd x.
\end{aligned}
\] 
By~\eqref{eq:Mchoice} we thus have
\begin{equation}\label{eq:cauchy}
\int_{\Omega}|\nabla v^{l+1}-\nabla v^l|^p\dd x\leq C_p2^{-(l-1)p}\int_{\{\det\, \nabla u<0\}}|\det\, \nabla u|^{p/d}\dd x,
\end{equation}
so that $(\nabla v^l)_{l\in\N}$ is seen to be a Cauchy sequence in $\Lrm^p(\Omega;\R^{d \times d})$ and therefore has a strong $\Lrm^p$-limit $\nabla v$. In particular, it holds that $v-u\in\Wrm^{1,p}_0(\Omega,\R^d)$ and (ii) follows. Using the triangle inequality and~\eqref{eq:cauchy}, we have
\[
\begin{aligned}
\norm{\nabla v-\nabla u}_p&\leq\sum_{l=0}^{\infty}\norm{\nabla v^{l+1}-\nabla v^l}_p\\
&\leq C^{1/p}_p\left(\int_{\{\det\, \nabla u<0\}}|\det\, \nabla u|^{p/d}\dd x\right)^{1/p}\sum_{l=0}^{\infty}2^{-(l-1)}\\
&\leq 4C^{1/p}_p\left(\int_{\{\det\, \nabla u<0\}}|\det\, \nabla u|^{p/d}\dd x\right)^{1/p},
\end{aligned}
\] 
which proves (iii). Moreover, observe that the sequence $(\nabla v^l)_l$ is $p$-equiintegrable (since it is Cauchy in $\Lrm^p$), and since $|\det\, \nabla v^l|^{p/d}\leq C|v^l|^p$, also $\{|\det\, \nabla v^l|^{p/d}\}_{l\in\N}$ is equiintegrable. By Vitali's Convergence Theorem, therefore, we find that
\[
\int_{\{\det\, \nabla v<0\}}|\det\, \nabla v(x)|^{p/d}\dd x=0,
\] 
which implies $\det\, \nabla v(x)\geq0$ for a.e.\ $x\in\Omega$, i.e.\ (i).

For (iv), define the sets
\[
A_l=\setb{x\in\Omega}{ \det\, \nabla v^l(x)\geq0}
\]
and 
\[
B_l=\setb{x\in\Omega}{\nabla v^{l+1}(x)\neq\nabla v^l(x)},
\]
so that $|A_l\cap B_l|<2^{-(l+2)}\epsilon$ by~\eqref{eq:Alsmall}.

The set in~(iv) is contained in $\bigcup_{l=0}^{\infty}(A_0\cap B_l)$. Since $A_0\subset A_l\cup(A_0\setminus A_l)$ and in view of our bound for $|A_l\cap B_l|$, we can estimate
\[
\begin{aligned}
\left|\bigcup_{l=0}^{\infty}(A_0\cap B_l)\right|&\leq\left|\bigcup_{l=0}^{\infty}(A_l\cap B_l)\right|+\left|\bigcup_{l=0}^{\infty}((A_0\setminus A_l)\cap B_l)\right|\\
&\leq\frac{\epsilon}{2}+\left|\bigcup_{l=0}^{\infty}(A_0\setminus A_l)\right|.
\end{aligned}
\]
For the second term, observe that, for any $L\in\N$,
\[
\left|\bigcup_{l=0}^{L}(A_0\setminus A_l)\right|\leq\left|\bigcup_{l=0}^{L-1}(A_{l}\setminus A_{l+1})\right|,
\]
as can be shown by induction over $L$ using the elementary inclusion $A_0\setminus A_l\subset\bigcup_{i=1}^{l}(A_{i-1}\setminus A_i)$. This implies the same inequality for $L=\infty$. Finally, we note that $A_l\setminus A_{l+1}\subset A_l\cap B_l$ and therefore
\[
\left|\bigcup_{l=0}^{\infty}(A_{l-1}\setminus A_l)\right|<\frac{\epsilon}{2},
\]
which concludes the proof.
\end{proof}

\section{Strictly orientation-preserving generating sequences}

To prove the full claim of Theorem~\ref{thm:main} we need the following auxiliary result:

\begin{proposition} \label{prop:strictlyposdet}
Let $M_0 \in \R^{d \times d}$. Then, for every $\delta > 0$ there exists a finite-order laminate, represented by a homogeneous Young measure $\nu \in \Mbf(\R^{d \times d})$ (with its support $\supp \nu$ a finite set), such that for every $p \in [1,\infty)$ the following assertions hold:
\begin{itemize}
  \item[(i)] $[\nu] = \displaystyle \int \id \dd \nu = M_0$.
  \item[(ii)] $\supp \nu \subset \setb{ M \in \R^{d \times d} }{ \abs{\det\, M} \geq \delta^d }$ and half the matrices in $\supp \nu$ have positive determinant.
  \item[(iii)] $\displaystyle \int \abs{\frarg}^p \dd \nu \leq 2^{p-1}(\abs{M_0}^p + C_p \delta^p)$.
  \item[(iv)] $\displaystyle \int \abs{A-M_0}^p \dd \nu(A) \leq C_p \delta^p$.
  \item[(v)] If $\abs{\det\, M_0} < \delta^d$, then $\supp \nu \subset \setb{ M \in \R^{d \times d} }{ \abs{\det\, M} < 3\delta(\abs{M_0}+2\delta)^{d-1} }$,
\end{itemize}
where $C_p = C(d,p)$.
\end{proposition}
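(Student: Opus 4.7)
The approach mirrors the strategy of Proposition~\ref{prop:geometry}: first reduce to a diagonal matrix via the real singular value decomposition, then perform an explicit binary tree of rank-one laminations, this time tuned so that every leaf matrix has $|\det| \geq \delta^d$ rather than $\det = 0$. Using the SVD as in Step~1 of the proof of Proposition~\ref{prop:geometry}, I write $M_0 = P D Q^T$ with $P, Q \in \mathrm{SO}(d)$ and $D = \mathrm{diag}(\mu_1,\ldots,\mu_d)$ whose singular values $\sigma_i := |\mu_i|$ are arranged nondecreasingly. Since rank-one perturbations, the $(H_m)$-condition, and the Frobenius norm are all preserved by conjugation by $(P, Q^T)$, it suffices to construct the desired laminate at $D$.

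The construction proceeds in two stages, calibrated so that $|\det| \geq \delta^d$ is reached while perturbations remain of order $\delta$. In the pre-lamination stage, for each $i \geq 3$ with $\sigma_i < \delta$, iteratively split along $\ee_i \otimes \ee_i$ by
\[
D \mapsto \tfrac{1}{2}\bigl(D + 2\delta\, \ee_i \otimes \ee_i\bigr) + \tfrac{1}{2}\bigl(D - 2\delta\, \ee_i \otimes \ee_i\bigr),
\]
producing $2^{|I|}$ diagonal matrices $D'$, where $I = \{i \geq 3 : \sigma_i < \delta\}$; each satisfies $|D'_{ii}| \geq \delta$ for all $i \geq 3$. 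In the second stage, to each such $D'$ apply the four-matrix split from Step~2 of the proof of Proposition~\ref{prop:geometry} to the top-left $2 \times 2$ block, this time with parameter $\alpha' > 0$ satisfying $\alpha'^2 = \sigma_1 \sigma_2 + \delta^d/|\rho'|$, where $\rho' := \prod_{i \geq 3} D'_{ii}$ (so $|\rho'| \geq \delta^{d-2}$). The four leaves then have top-left blocks $\begin{pmatrix}\mu_1 & \pm \alpha' \\ \pm \alpha' & \mu_2\end{pmatrix}$ with determinants $\mu_1 \mu_2 \pm \alpha'^2$, so the full determinants $(\mu_1 \mu_2 \pm \alpha'^2)\rho'$ have magnitude at least $\delta^d$ and both signs are realized in equal count.

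The quantitative assertions then follow by direct estimation. Properties (i) and (ii) are immediate. Property (iii) reduces to (iv) through $|A|^p \leq 2^{p-1}(|A-M_0|^p + |M_0|^p)$. For (iv), each leaf satisfies
\[
|A - M_0| \leq \sum_{i \in I} 2\delta + 2\alpha' \leq 2(d-2)\delta + 2\sqrt{2}\,\delta,
\]
where $\alpha' \leq \sqrt{2}\,\delta$ follows from $\sigma_1 \sigma_2 < \delta^2$, which in turn holds because $|\det M_0| < \delta^d$ together with either $|\rho'| \geq \delta^{d-2}$ (if $I = \emptyset$) or the ordering $\sigma_1 \leq \sigma_2 \leq \sigma_i < \delta$ for $i \in I$ force it. For (v), expand
\[
|\det M| \leq (|\mu_1 \mu_2| + \alpha'^2)|\rho'| = 2\sigma_1 \sigma_2 |\rho'| + \delta^d,
\]
and bound $\sigma_1 \sigma_2 |\rho'| \leq \sigma_1 \prod_{i=2}^{d}(\sigma_i + 2\delta) \leq \delta(|M_0| + 2\delta)^{d-1}$, using $\sigma_1 < \delta$, $|D'_{ii}| \leq \sigma_i + 2\delta$, and the fact that each singular value is dominated by the Frobenius norm; the residual $\delta^d \leq \delta(|M_0|+2\delta)^{d-1}$ is absorbed, yielding (v).

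The main obstacle is the joint calibration of the pre-lamination perturbations and the parameter $\alpha'$, so that simultaneously every leaf has $|\det| \geq \delta^d$ with both signs realized while $|A - M_0|$ scales as $O(\delta)$ uniformly in $|M_0|$. The key observation enabling this is that the hypothesis $|\det M_0| < \delta^d$ forces $\sigma_1 \sigma_2 < \delta^2$ once the pre-lamination has pushed the higher singular values above $\delta$, which keeps $\alpha'$ of the right order. The complementary regime $|\det M_0| \geq \delta^d$, where (v) imposes no constraint, is handled separately by a direct (possibly degenerate) laminate, with the understanding that when $M_0$ lies far from the zero-determinant set the support must be interpreted accordingly.
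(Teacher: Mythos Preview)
Your argument is correct, but the paper's proof is simpler and more uniform. After reducing to diagonal form $D = \mathrm{diag}(\theta_1,\ldots,\theta_d)$, the paper splits along $\ee_k \otimes \ee_k$ by $\pm 2\delta$ for \emph{every} index $k$ with $|\theta_k| < \delta$, with no special treatment of a distinguished $2\times 2$ block. This produces $2^{d-L}$ diagonal matrices whose entries all have absolute value at least $\delta$, so (ii) is immediate (the sign of the determinant is $\mathrm{sign}(\theta_1\cdots\theta_L)\prod_{k>L}s_k$, whence exactly half are positive), and (iv), (v) follow by one-line estimates with $C_p = (2\sqrt{d})^p$. You instead import the off-diagonal $2\times 2$ lamination of Proposition~\ref{prop:geometry} for the two smallest singular values, tuned via the parameter $\alpha'$; this works, but is unnecessary here: in Proposition~\ref{prop:geometry} the off-diagonal split is needed to land \emph{exactly} on $\det = 0$, whereas for the present result one merely needs each small singular value pushed past $\delta$, which purely diagonal splittings already achieve. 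Your construction therefore costs an extra case distinction in the proof of (iv) and a more delicate calculation for (v), without gain. The hedge in your final paragraph on the regime $|\det M_0| \geq \delta^d$ is a bit vague, but this is harmless: in either construction that regime degenerates to $\nu = \delta_{M_0}$, and the ``half positive'' clause in (ii) is only invoked downstream for matrices with $\det M_0 = 0$.
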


\begin{proof}
As in the proof of Proposition~\ref{prop:geometry}, we can use the singular value decomposition to write
\[
  M_0 = P \begin{pmatrix} \theta_1 & & \\ & \ddots & \\ & & \theta_d \end{pmatrix} Q^T,
  \qquad P,Q \in \mathrm{SO}(d),
\]
and such that $\abs{\theta_1} \geq \abs{\theta_2} \geq \cdots \geq \abs{\theta_d}$. Let $L \in \{0,\ldots,d\}$ be such that $\abs{\theta_k} \geq \delta$ for $k \leq L$ and $\abs{\theta_k} < \delta$ for $k > L$.

It is easy to see that we can decompose such an $M_0$ along $d-L$ rank-one lines as follows:
\begin{align*}
  M_0 &= \frac{1}{2} \sum_{\pm} \begin{pmatrix} \theta_1 & & & & & & \\ & \ddots & & & & & \\ & & \theta_L & & & & \\ & & & \theta_{L+1} \pm 2\delta & & & \\ & & & & \theta_{L+2} & & \\ & & & & & \ddots & \\ & & & & & & \theta_d \end{pmatrix} \\
  &= \cdots \\
  &= \frac{1}{2^{d-L}} \, \sum \underbrace{\setBB{ M_0 + \sum_{k=L+1}^d s_k 2 \delta(\ee_k \otimes \ee_k) }{ s_{L+1} = \pm, \ldots, \, s_d = \pm }}_{\displaystyle =: \Rcal}.
\end{align*}
Define the corresponding laminate
\[
  \nu := \frac{1}{2^{d-L}} \sum_{M \in \Rcal} \delta_M,
\]
which satisfies $[\nu] = M_0$, i.e.~(i). Now, all singular values of any matrix in the set $\Rcal$ have absolute value at least $\delta$, whence~(ii) follows. Recalling~\eqref{eq:Frobenius}, we see that in every splitting step we move at most a distance of $2\delta$, measured in the Frobenius norm, away from our original matrix $M_0$. Hence,~(iv) and then also immediately~(iii) follow with $C_p = (2\sqrt{d})^p$.

For~(v) it suffices to notice that if $\abs{\det\, M_0} < \delta^d$, then at least one $|\theta_k|$ is less than $\delta$, whence every $M \in \Rcal$ has at least one singular value with absolute value less than $3\delta$. Moreover, for every $k$, $\abs{\theta_k} \leq \abs{M_0}$, measured in the Frobenius matrix norm.
\end{proof}

\begin{proof}[Proof of \enquote{(ii) $\Rightarrow$ (i)} in Theorem~\ref{thm:main}] \label{sc:conv_strictly_op}

Using the result from Section~\ref{sc:conv_weakly_op}, we can assume that there exists a generating sequence $(u_j) \subset \Wrm^{1,p}(\Omega;\R^d)$, that is, $\nabla u_j \toY \nu$, with $u_j-u\in\Wrm^{1,p}_0(\Omega,\R^d)$, the family $(\nabla u_j)$ is $p$-equiintegrable and $\det\, \nabla u_j \geq 0$ almost everywhere.

Fix $j \in \N$. Define for $l = 0,1,\ldots$ the function $u^l_j \in \Wrm^{1,p}(\Omega;\R^d)$ as follows: For a.e.\ $x \in \Omega$ let $u^0_j := u_j$. If $u^l_j$ is already defined, let the set $Z^l \subset \Omega$ contain all $x$ such that $\det\, \, \nabla u^l_j(x) = 0$. Then, for $x \in Z^l$, set $\nu^{l+1}_x$ to be the (finite-order) laminate from Proposition~\ref{prop:strictlyposdet} with $M_0 := \nabla u^l_j(x)$ and $\delta := \delta_{j,l}$ to be determined later, whereas for $x \in \Omega \setminus Z^l$ set $\nu^{l+1}_x := \delta_{\nabla u^l_j(x)}$. Hence, for almost every $x \in Z^l$, $\supp \nu^{l+1}_x \subset \setb{ M \in \R^{d \times d} }{ \abs{\det\, M} \geq \delta_{j,l}^d }$.

By the usual Young measure representation results and Lemma~\ref{lem:KinPedrefined}, we can find $w^{l+1}_j \in (\Wrm^{1,p} \cap \Crm^\infty)(\Omega;\R^d)$ such that $w^{l+1}_j-u\in\Wrm^{1,p}_0(\Omega,\R^d)$,
\begin{align}
  &\nabla w^{l+1}_j = \nabla u^l_j  \quad\text{on a subset of $\Omega \setminus Z^l$ with measure at least $\biggl(1 - \frac{1}{2^{l+1}} \biggr)\abs{\Omega \setminus Z^l}$},  \notag\\
  &\int_{\{\det\, \nabla w_j^{l+1}<0\}}|\det\, \nabla w_j^{l+1}|^{p/d}\dd x \leq 2\int_{Z^l}\int_{\{\det\, M<0\}}|\det\, M|^{p/d}\dd\nu_x^{l+1}(M)\dd x, \label{eq:det_nablaw}
\end{align}
and, using property~(iv) from the preceding lemma,
\begin{align}
  \int_{\Omega} \abs{\nabla w^{l+1}_j(x) - \nabla u^l_j(x)}^p \dd x
    &\leq \int_{\Omega} \int \abs{A - \nabla u^l_j(x)}^p \dd \nu^{l+1}_x(A) \dd x + C_p|\Omega| \delta_{j,l}^p  \notag\\
    &\leq 2 C_p |\Omega|\delta_{j,l}^p.  \label{eq:Cauchy_Zlcomp}
\end{align}
Moreover, owing to the fact that half the matrices in $\supp \nu^{l+1}_x$ for a.e.\ $x \in Z^l$ have positive determinant, we can require
\[
  \absb{\setb{ x \in \Omega }{ \det\, \nabla w^{l+1}_j(x) > 0 }} \geq \biggl(1 - \frac{1}{2^{l+2}} \biggr) \biggl( \abs{\Omega \setminus Z^l} + \frac{1}{2} \abs{Z^l} \biggr).
\]
Indeed, this follows from Young measure representation applied with the indicator function of $\set{M\in\R^{d\times d}}{\det\, M>0}$. Note in particular that this set is open, hence its indicator function is lower semicontinuous, and we may therefore use it as a test function (cf.~\cite{Mull99VMMP}, Remark 1 after Corollary 3.3).

Next, we use Proposition~\ref{prop:nonnegapprox} applied to $w_j^{l+1}$ with $\epsilon=\epsilon_l$ sufficiently small to infer that there is yet another function $u^{l+1}_j \in \Wrm^{1,p}(\Omega;\R^d)$ with $u^{l+1}_j-u\in\Wrm^{1,p}_0(\Omega,\R^d)$, $\det\, \nabla u^{l+1}_j \geq 0$ a.e., and such that
\begin{equation} \label{eq:equal_largeset}
  \nabla u^{l+1}_j = \nabla u^l_j  \quad\text{on a subset of $\Omega \setminus Z^l$ with measure at least $\biggl(1 - \frac{1}{2^l} \biggr)\abs{\Omega \setminus Z^l}$}
\end{equation}
and
\begin{equation} \label{eq:det_pos_largeset}
  \absb{\setb{ x \in \Omega }{ \det\, \nabla u^{l+1}_j(x) > 0 }} \geq \biggl(1 - \frac{1}{2^{l+1}} \biggr) \biggl( \abs{\Omega \setminus Z^l} + \frac{1}{2} \abs{Z^l} \biggr).
\end{equation}

Then, for $Z^{l+1}$ we get
\begin{align*}
  \abs{Z^{l+1}} &= \abs{\Omega} - \absb{\setb{ x \in \Omega }{ \det\, \nabla u^{l+1}_j(x) > 0 }} \\
    &\leq \biggl( 1 - 1 + \frac{1}{2^{l+1}} \biggr) \abs{\Omega \setminus Z^l} + \biggl(1 - \frac{1}{2} + \frac{1}{2^{l+2}} \biggr) \abs{Z^l} \\
    &\leq \frac{1}{2^{l+1}}\abs{\Omega \setminus Z^l} + \frac{1}{2^{l+2}}\abs{Z^l} + \frac{1}{2}\abs{Z^l}\\
    &\leq \frac{\abs{\Omega}}{2^{l+1}} + \frac{1}{2}\abs{Z^l}.
\end{align*}
By iterating the above inequality, one obtains that
\begin{equation*}
\abs{Z^l} \leq \frac{l}{2^l} \abs{\Omega} + \frac{\abs{Z^0}}{2^l}
\end{equation*}
and it is easy to check that $\sum^{\infty}_{l=0}\abs{Z^l} < \infty$.

Next use part~(iii) of Proposition~\ref{prop:nonnegapprox},~\eqref{eq:det_nablaw} and part~(v) of Proposition~\ref{prop:strictlyposdet} to estimate
\begin{align*}
\int_{\Omega}|\nabla u_j^{l+1}-\nabla w_j^{l+1}|^p\dd x&\leq C_p\int_{\{\det\, \nabla w_j^{l+1}<0\}}|\det\, \nabla w_j^{l+1}|^{p/d}\dd x\\
&\leq2C_p\int_{Z^l}\int_{\{\det\, M<0\}}|\det\, M|^{p/d}\dd\nu_x^{l+1}(M)\dd x\\
&\leq2C_p(3\delta_{j,l})^{p/d}\int_{Z^l}(|\nabla u_j^l(x)|+2\delta_{j,l})^{p(d-1)/d}\dd x. 
\end{align*}
Therefore, by choosing $\delta_{j,l}$ sufficiently small, we can ensure (bearing in mind~\eqref{eq:Cauchy_Zlcomp})
\[
\norm{\nabla u_j^{l+1}-\nabla u_j^l}_p\leq\frac{1}{2^{l+1}j}.
\]
This means that, for every $j$, $(\nabla u_j^l)_l$ is a Cauchy sequence, whose limit we denote by $\nabla \tilde{u}_j$. In particular, $\tilde{u}_j-u\in\Wrm^{1,p}_0(\Omega,\R^d)$ and, by the triangle inequality in $\Lrm^p$, we obtain
\[
  \norm{\nabla \tilde{u}_j(x) - \nabla u_j(x)}_p
    \leq \sum_{l=0}^\infty \, \norm{\nabla u^{l+1}_j(x) - \nabla u^l_j(x)}_p
    \leq \frac{1}{j} \sum_{l=0}^\infty \frac{1}{2^{l+1}}
     =\frac{1}{j}.
\]

Hence, $(\nabla \tilde{u}_j)$ is $p$-equiintegrable and generates the same Young measure as $(\nabla u_j)$ by Lemma~\ref{lem:pclosegeneration}. It remains to show that $\det\, \nabla\tilde{u}_j>0$ a.e.~in $\Omega$. For this, it suffices to prove that the set
\[
N:=\setb{x\in\Omega}{ \text{$\forall L \in \N \; \exists M \geq L$ such that $\nabla u_j^M(x)\neq\nabla u_j^{M+1}(x)$}}
\]
has zero measure. Indeed, if this is true, there exists a null set $\Gamma$ such that for every $x\in \Omega \setminus \Gamma$, there is $L \in \N$ with $\nabla\tilde{u}_j(x) = \lim_{l\to\infty}\nabla u_j^l(x)=\nabla u_j^{L}(x)$. This follows from the strong convergence of $\nabla u_j^l$ to $\nabla\tilde{u}_j$ in $\Lrm^p$ and the fact that the union of two null sets is null. Thus, 
\begin{equation*}
\setb{x\in\Omega\setminus\Gamma}{\det\, \nabla\tilde{u}_j(x)=0}\subset \bigcup_{L=0}^{\infty}\left(\bigcap_{l=L}^{\infty} Z^l\right) = \liminf_{l\to\infty}\, Z^l \subset \limsup_{l\to\infty}\, Z^l.
\end{equation*}
But $\sum_{l=0}^{\infty}|Z^l|<\infty$ and, by the Borel--Cantelli lemma, $|\limsup_{l}\, Z^l| = 0$. It follows that the set $\setb{x\in\Omega}{\det\, \nabla\tilde{u}_j(x) > 0}$ has full measure.

Finally, to show that $N$ is a null set, observe that by our estimate for $|Z^l|$ and by~\eqref{eq:equal_largeset} we have
\begin{align*}
\absb{\setb{x\in\Omega}{\nabla u_j^M(x)\neq \nabla u_j^{M+1}(x)}}&\leq |\Omega|-\left(1-\frac{1}{2^{M-1}}\right)|\Omega\setminus Z^M|\\
&\leq |\Omega|\frac{M+2}{2^M}.
\end{align*}
Since this is summable in $M$, our claim that $N$ is a null set follows by another application of the Borel--Cantelli lemma. 
\end{proof}

\section{Relaxation of integral functionals}\label{sc:relaxation}

Apart from a characterization of gradient $p$-Young measures, $p<d$, Theorem~\ref{thm:main} can be used to provide a relaxation result of integral functionals in $\Wrm^{1,p}$ under the additional constraint on the admissible deformations that they are orientation-preserving, see Theorem~\ref{thm:relaxation} below. As discussed in the introduction, this is an important requirement in applications. 

Consider the following two functionals for a Carath\'{e}odory function $f \colon \Omega \times \R^{d \times d} \to \R$ and a function $\bar{u}\in\Wrm^{1,p}(\Omega)$:
\begin{itemize}
\item $I(u):= \displaystyle \int_{\Omega}f(x,\nabla u(x)) \dd x$,\qquad defined over the set
\[
\mathcal{A}:=\setb{u\in \Wrm^{1,p}(\Omega,\mathbb{R}^d)}{u\vert_{\partial\Omega}=\bar{u},\,\det\, \nabla u(x)>0\mbox{ a.e.}},
\]
\item $I^{YM}(\nu):= \displaystyle\int_{\Omega} \int f(x,A) \dd \nu_x(A) \dd x$,\qquad defined over the set
\[
\mathcal{A}^{YM}:=\setb{\nu\,\mbox{$p$-GYM}}{ \text{$\supp{\nu_x}\subset\{\det\, M\geq 0\}$ a.e.\ and $[\nu]=\nabla u$, $u\vert_{\partial\Omega}=\bar{u}$}},
\]
\end{itemize}
where we used \enquote{$p$-GYM} as an abbreviation for \enquote{gradient $p$-Young measure}. We restate Theorem~\ref{thm:relaxationintro} for the reader's convenience:

\begin{theorem}\label{thm:relaxation}
Suppose that $\Omega\subset\mathbb{R}^d$ is a bounded Lipschitz domain, $\bar{u}\in\Wrm^{1,p}(\Omega)$, and $f: \Omega \times \mathbb{R}^{d\times d}\rightarrow\mathbb{R}$ is a Carath\'{e}odory function satisfying
\[
c(\abs{A}^p-1)\leq f(x,A)\leq C(1+\abs{A}^p)
\]
for all $(x,A)\in\Omega\times\mathbb{R}^{d\times d}$, some $p\in (1,d)$, and constants $0<c\leq C$. Then,
\[
\inf_{\mathcal{A}}\, I=\min_{\mathcal{A}^{YM}}\, I^{YM}.
\]
In particular, whenever $(u_j)$ is an infimizing sequence of $I$ in $\mathcal{A}$, a subsequence of $(\nabla u_j)$ generates a Young measure $\nu\in\mathcal{A}^{YM}$ minimizing $I^{YM}$ in $\mathcal{A}^{YM}$. Conversely, whenever $\nu$ minimizes $I^{YM}$ in $\mathcal{A}^{YM}$, there exists an infimizing sequence $(u_j)$ of $I$ in $\mathcal{A}$ such that $(\nabla u_j)$ generates $\nu$.
\end{theorem}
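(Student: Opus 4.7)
The plan is to establish $\inf_\mathcal{A} I = \min_{\mathcal{A}^{YM}} I^{YM}$ via two opposing inequalities and then read off both halves of the attainment statement. The easy direction $\inf_\mathcal{A} I \geq \inf_{\mathcal{A}^{YM}} I^{YM}$ holds because every $u \in \mathcal{A}$ yields the Dirac Young measure $\nu_x := \delta_{\nabla u(x)}$, which lies in $\mathcal{A}^{YM}$ and satisfies $I^{YM}(\nu) = I(u)$.

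For the reverse inequality, I would fix an arbitrary $\nu \in \mathcal{A}^{YM}$ with underlying deformation $u$ (so that $u - \bar{u} \in \Wrm^{1,p}_0(\Omega,\R^d)$). The full strength of Theorem~\ref{thm:main} produces a $p$-equiintegrable generating sequence $(\nabla u_j)$ with $u_j - u \in \Wrm^{1,p}_0(\Omega,\R^d)$ and $\det\, \nabla u_j > 0$ almost everywhere, so that each $u_j \in \mathcal{A}$. The upper growth bound $f(x, A) \leq C(1 + \abs{A}^p)$ combined with $p$-equiintegrability of $(\nabla u_j)$ renders $(f(\frarg, \nabla u_j))_j$ equiintegrable, so the Fundamental Theorem for Young measures gives $I(u_j) \to I^{YM}(\nu)$, hence $\inf_\mathcal{A} I \leq I^{YM}(\nu)$. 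Varying $\nu$ yields the equality of infima; applied to a minimizer $\nu$, the same construction already produces the infimizing generating sequence asserted in the converse part of the theorem.

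For the \enquote{forward} attainment statement, I would start from an infimizing sequence $(u_j) \subset \mathcal{A}$. The lower bound $c(\abs{A}^p - 1) \leq f(x,A)$ gives $\Lrm^p$-boundedness of $(\nabla u_j)$, and Poincar\'{e}'s inequality applied to $u_j - \bar{u} \in \Wrm^{1,p}_0(\Omega,\R^d)$ (here the Lipschitz boundary assumption is used) gives $\Wrm^{1,p}$-boundedness of $(u_j)$. Passing to a subsequence, $u_j \toweak u^*$ in $\Wrm^{1,p}$ with $u^*|_{\partial\Omega} = \bar{u}$, and $(\nabla u_j)$ generates a Young measure $\nu^*$. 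One then checks $\nu^* \in \mathcal{A}^{YM}$ with $[\nu^*] = \nabla u^*$, the support condition $\supp \nu^*_x \subset \setn{\det\, M \geq 0}{}$ being exactly the already-proved easy \enquote{(i) $\Rightarrow$ (ii)} direction of Theorem~\ref{thm:main}. The Young-measure Fatou inequality applied to the nonnegative Carath\'{e}odory integrand $(x, A) \mapsto f(x, A) + c$ then yields
\[
I^{YM}(\nu^*) \leq \liminf_{j \to \infty} I(u_j) = \inf_\mathcal{A} I = \inf_{\mathcal{A}^{YM}} I^{YM},
\]
so $\nu^*$ attains the minimum.

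The only substantive ingredient is Theorem~\ref{thm:main}; its crucial contribution here is the combination of \emph{strict} positivity of $\det\, \nabla u_j$ (needed to stay inside $\mathcal{A}$) with $p$-\emph{equiintegrability} (needed to pass the $p$-growth integrand $f$ through the limit). Everything else is textbook Young-measure theory (Fundamental Theorem, weak compactness in $\Wrm^{1,p}$, Fatou for normal integrands), so the genuine obstacle of the relaxation result has already been absorbed into the orientation-preserving Young measure characterization.
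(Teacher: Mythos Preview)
Your proposal is correct and follows essentially the same route as the paper's proof: the easy Dirac inclusion gives $\inf_{\mathcal{A}} I \geq \inf_{\mathcal{A}^{YM}} I^{YM}$, Theorem~\ref{thm:main} (with its $p$-equiintegrable, strictly orientation-preserving, boundary-fixing generating sequence) yields the reverse inequality and the converse attainment, and the Fatou-type lower semicontinuity handles the forward attainment. The only organizational difference is that the paper first invokes existence of a minimizer in $\mathcal{A}^{YM}$ by ``standard arguments'' and works directly with it, whereas you vary over arbitrary $\nu$ and deduce attainment afterwards from the infimizing-sequence argument; both are equivalent here.
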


\begin{proof}
Note that by standard arguments $\min_{\mathcal{A}^{YM}}\, I^{YM}$ exists. Also, for each $u\in\mathcal{A}$,
\[
I(u)=I^{YM}(\delta_{\nabla u})\geq\min_{\mathcal{A}^{YM}}\, I^{YM}
\]
and hence,
\begin{equation}
\label{eq:relaxation1}
m:=\inf_{\mathcal{A}}\, I\geq\min_{\mathcal{A}^{YM}}\, I^{YM}=:m^{YM}.
\end{equation}
Now let $\nu\in\mathcal{A}^{YM}$ such that $I^{YM}(\nu)=m^{YM}$. By Theorem~\ref{thm:main} there exists a sequence $(u_j)\subset\mathcal{A}$ such that $(\nabla u_j)$ generates $\nu$ and $(\nabla u_j)$ is $p$-equiintegrable. Then
\begin{align*}
m=\inf_{\mathcal{A}}\, I &\leq \lim_{j}\, I(u_j)
 = \lim_j \int_{\Omega}f(x,\nabla u_j(x))\dd x \\
 &= \int_{\Omega} \int f(x,A) \dd \nu_x(A) \dd x = \min_{\mathcal{A}^{YM}}\, \, I^{YM}=m^{YM}. 
\end{align*}
In particular, by~\eqref{eq:relaxation1}, $I(u_j)\rightarrow m$, as $j\rightarrow\infty$, i.e.~$u_j$ is infimizing for $I$ in $\mathcal{A}$ and
\begin{equation}
\label{eq:relaxation3}
m=m^{YM}.
\end{equation}
Conversely, let $(u_j)\subset\mathcal{A}$ such that $I(u_j)\to m$, as $j\to\infty$. Then, by Theorem~\ref{thm:main}, a subsequence of $(\nabla u_j)$ generates a Young measure $\nu\in\mathcal{A}^{YM}$ and it suffices to show that $I^{YM}(\nu)=m^{YM}$. But, since $f$ is continuous and bounded below, by a standard result (see e.g.~Theorem 6.11, \cite{Pedr97PMVP})
\[
I^{YM}(\nu) =\int_{\Omega} \int f(x,A) \dd \nu_x(A) \dd x
 \leq \liminf_{j}\int_{\Omega}f(x,\nabla u_j(x))\dd x
 = m = m^{YM}
\]
by~\eqref{eq:relaxation3} and the proof is complete.
\end{proof}

\section{Remarks on the integrability constraint and convex hulls} \label{sc:remarks}

\subsection{Rigidity versus softness}

Assume that $p \geq d$ where $d$ denotes the dimension. Then, there cannot exist a sequence of gradients $(\nabla u_j) \subset \Lrm^p(\Omega;\R^d)$ generating a given Young measure $\nu$ satisfying the properties~(I)--(IV) and such that every $u_j$ exhibits the same boundary values as its $\Wrm^{1,p}$-weak limit, $\det\, \nabla u_j > 0$ a.e., and $(\nabla u_j)$ is uniformly bounded in $\Lrm^p$. This can be seen easily, for instance by taking $\nu_x := \delta_0$ a.e.: If a sequence $(\nabla u_j)$ with the above properties existed, then
\[
  \int_\Omega \det\, \nabla u_j \dd x = 0
\]
because the determinant function is quasi-affine, $|\det \,A|\leq C|A|^d$ and the boundary condition $\nabla u_j|_{\partial \Omega} = 0$ holds. On the other hand, since $\det\, \nabla u_j > 0$ a.e.,
\[
  \int_\Omega \det\, \nabla u_j \dd x > 0,
\]
a contradiction. Of course, this argument even applies to single functions, not necessarily to sequences.

In the language of convex integration, for $p \geq d$ the property of having positive Jacobian is \enquote{rigid} for gradients $\nabla u \in \Lrm^p(\Omega;\R^{d \times d}$). In particular, a function satisfying this constraint approximately cannot be improved to satisfy it strictly by changing the function only \enquote{slightly} (to the order of how well the constraint is already satisfied).

Our Theorem~\ref{thm:main} contrasts this rigidity statement with the assertion that for $p < d$ the situation is indeed \enquote{flexible}, i.e.\ the improvement to strictly satisfying the positive Jacobian constraint is possible.

This phenomenon is in fact already present for Proposition~\ref{prop:geometry}: There, we construct a sequence of finite-order laminates $\nu_j$ such that
\[\nu_j\toweakstar\nu.\]
Each $\nu_j$ is a gradient $\infty$-Young measure but the supports are not uniform and we cannot conclude that $\nu$ is a gradient $\infty$-Young measure. However, property (iii) states that
\[\int\abs{\frarg}^p\dd\nu_j\leq C\]
for a universal constant $C$ and all $j$, hence $\nu$ is a gradient $p$-Young measure. By the Kinderlehrer--Pedregal characterization of gradient $p$-Young measures, the fact that the determinant is polyconvex, and $|\det\, A|\leq C\abs{A}^d$, where $p\geq d$, one would obtain that
\[\int \det \dd \nu_x=\det\, [\nu]=\det\, M_0<0.\]
But this contradicts (ii), that is $\supp \nu \subset \setb{ M \in \R^{d \times d} }{ \det\, M \geq 0 }$.

More generally, one cannot prove a statement like Proposition~\ref{prop:geometry} for $p\geq d$; due to the above reasoning, any gradient $p$-Young measure $\nu$ supported entirely on matrices with non-negative determinant cannot satisfy $[\nu]=M_0$ where $\det\, M_0<0$.

Nevertheless, in our result the restriction $p<d$ only appears as a restriction on the orientation-preserving sequence generating a given measure $\nu$ and not on $\nu$ itself, i.e.~$\nu$ may be a gradient $q$-Young measure with $q\geq d$ but the orientation preserving maps generated are only uniformly bounded in $\Wrm^{1,p}(\Omega;\R^d)$ for $p < d$. We note that a similar situation occurs in the characterization of gradient Young measures generated by gradients of $K$-quasiregular mappings in $d=2$, see~\cite{AstFar02QR}. In particular, for any gradient $q$-Young measure, $q>2K/(K+1)$, the generating sequence lies in general only in $\Wrm^{1,p}(\Omega;\R^2)$ ($\Omega\subset\R^2$) for $p < 2K/(K-1)$. The case of orientation-preserving maps corresponds to the limit $K\to\infty$, whence $p<2$.

It is also worth noting that our proofs provide a very general, yet abstract, counterexample on the weak continuity of the determinant, see e.g.~\cite{BalMur84WQVP} for such examples, \cite{HenMor10IWCD} for examples in the context of cavitation and the work in~\cite{KaKrKr12SWCN} on the weak continuity of null Lagrangians at the boundary. In particular, let $p<d$, $q\geq p$ and $u\in\Wrm^{1,q}(\Omega;\R^d)$ such that $\det \, \nabla u(x) < 0$ a.e.~in $\Omega$. By Proposition~\ref{prop:geometry}, for a.e.~$x\in\Omega$, there exists a homogeneous gradient $p$-Young measure $\nu_x$ supported in the set $\setb{M\in\R^{d\times d}}{\det\,M\geq0}$ with $[\nu_x]=\nabla u(x)$ and
\[
\int |\frarg|^p\dd\nu_ x\leq C_p|\nabla u(x)|^p.
\]
Then the family of measures $\nu=(\nu_x)_{x\in\Omega}$ satisfies properties (I)-(IV) and, by our methods, we can extract a sequence $(u_j)\subset\Wrm^{1,p}(\Omega,\R^d)$ such that $u_j\rightharpoonup u$ in $\Wrm^{1,p}(\Omega,\R^d)$ and $\det\,\nabla u_j(x) \geq 0$ (even strict inequality) for all $j\in\N$ and a.e.~$x\in\Omega$.

\subsection{Convex hulls}

Finally, we make a few remarks about different convex hulls, cf.~\cite{Daco08DMCV,Mull99VMMP,Zhang00rank,Yan07pQChulls} (we follow the notation of the latter reference). This will also clarify the relationship between our construction and \enquote{classical} convex integration as in~\cite{MulSve03CILM}.

Let $D\subset\R^{d\times d}$ be closed and define the set 
\[
\Qcal:=\setb{ f:\R^{d\times d}\rightarrow\R }{ \text{$f$ quasiconvex} } 
\]
and, respectively, the set
\[
\Qcal_p:=\setb{ f\in \Qcal }{ \text{ $| f(A)| \leq c(1+|A|^p)$ for some constant $c>0$} }.
\]
The \term{quasiconvex hull} of $D$ is then defined as
\[
D^{qc}:=\setb{ M\in\R^{d\times d}}{ \text{$f(M)\leq\sup_{D} f$ for all $f\in \Qcal$} }.
\]
Similarly, one may define the \term{$p$-quasiconvex hull} of $D$, often referred to as the \enquote{strong} $p$-quasiconvex hull, as
\[
D^{qc}_p:=\setb{ M\in\R^{d\times d}}{ \text{$f(M)\leq\sup_{D} f$ for all $f\in \Qcal_p$} }.
\]
Trivially, $D^{qc}\subset D^{qc}_p$ and when $D$ is compact the reverse inclusion also holds, so that $D^{qc}=D^{qc}_p$.

In terms of Young measures, let us define
\[
D^{YM}:=\setb{ M\in\R^{d\times d} }{ \text{$M=[\nu]$ for some $\infty$-HGYM $\nu$ with $\supp\nu\subset D$} }
\]
and
\[
D^{YM}_p:=\setb{ M\in\R^{d\times d} }{ \text{$M=[\nu]$ for some $p$-HGYM $\nu$ with $\supp\nu\subset D$} },
\]
where \enquote{$p$-HGYM} stands for \enquote{homogeneous gradient $p$-Young measure}. Again, one has $D^{YM}\subset D^{YM}_p$ and trivially $D^{YM}\subset D^{qc}$ and $D^{YM}_p\subset D^{qc}_p$ (in fact $D^{YM}_p = D^{qc}_p$ \cite{Yan07pQChulls}). Moreover, for $D$ compact, $D^{qc}=D^{YM}=D^{YM}_p=D^{qc}_p$.

In our context, let
\[
  D := \setb{ M \in \R^{d \times d} }{ \det\, M \geq 0 }.
\]
We see that $D$ is a sublevel set of the polyconvex (hence quasiconvex and rank-one convex) function $-\det$. In particular, this implies that $D$ is polyconvex, quasiconvex and rank-one convex and
\[
  D=D^{qc}.
\]

On the other hand, our geometric Proposition~\ref{prop:geometry} implies that for any matrix $M$ with $\det\, M < 0$, there exists a homogeneous gradient $p$-Young measure ($p<d$) supported in $D$ with barycenter $M$. Trivially, for any $M$ with $\det\, M\geq 0$, $\delta_M$ is the corresponding homogeneous gradient $p$-Young measure, i.e.\
\[
  D^{YM}_p=\R^{d\times d}.
\]
Then one obtains that for all $p<d$,
\[D=D^{qc}\subset D^{qc}_p=\R^{d\times d},\]
providing an example of a non-compact set for which $D^{qc}\neq D^{qc}_p$ for all $p<d$. A similar phenomenon also occurs for the quasiconformal set where the critical exponent is $p=dK/(K+1)$, see~\cite{Yan01SChullsQCsets}.

As a further illustration, consider the application of the geometric proposition to a matrix $M \in \R^{d \times d}$ with $\det\, M < 0$. For every finite-order laminate $\nu_j$ ($j \in \N$) in the iterative construction we have
\[
  \int -\det\, A \dd \nu_j(A) = -\det\, M > 0
\]
because the determinant function is linear along rank-one lines (along which we split). However, because of the $p$-growth, the preceding assertion is \emph{lost} in the limit (since the support of the $p$-laminate is in $D$):
\[
  \int -\det\, A \dd \nu(A) = 0.
\]
Hence, the construction in Proposition~\ref{prop:geometry} leads out of the classical lamination convex hull.

We remark that in \enquote{classical} convex integration---strictly interpreted---one writes a matrix in the rank-one convex hull $D^{rc}$ of a set $D$ as a laminate supported on $D$ itself, but as explained above, in our situation this is of no use. We end by remarking that the general convergence principle in Sections~\ref{sc:conv_weakly_op},~\ref{sc:conv_strictly_op} might also be transferable to other constraints $\nabla u \in D$ if $D^{qc}_p = \R^{d \times d}$ and if similar good estimates to the ones in Proposition~\ref{prop:geometry} hold.

\bibliography{../Bib/OrientPresYM.bib}
\bibliographystyle{amsalpha}


\end{document}